\documentclass[12pt]{article}
\usepackage[margin=1in]{geometry}
\usepackage{amsmath,amsthm,amssymb}
\usepackage{verbatim}
\usepackage{tikz}
\usepackage{mathtools}

\tikzstyle{vertex}=[draw,thick,fill=white,circle,inner sep=2pt]
\tikzstyle{tiny_vertex}=[draw,thick,fill=white,circle,inner sep=1.5pt]
\tikzstyle{full}=[draw,thick,fill=black,circle,inner sep=2pt]
\tikzstyle{empty}=[draw,color=black!40!white,thick,fill=white,circle,inner sep=2pt]

\newtheorem{thm}{Theorem}[section]
\newtheorem{cor}[thm]{Corollary}
\newtheorem{lem}[thm]{Lemma}

\theoremstyle{definition}

\newtheorem{defn}[thm]{Definition}
\newtheorem{ex}[thm]{Example}

\allowdisplaybreaks

\DeclareMathOperator{\dist}{dist}
\DeclareMathOperator{\spec}{spec}

\DeclareMathOperator{\diag}{diag}

\newcommand{\DL}{\mathcal{D}^L}

\newcommand{\T}[1]{~^T\!#1}

\title{Cospectral constructions for several graph matrices using cousin vertices}
\author{Kate Lorenzen\footnote{Iowa State University, Ames, IA, USA \texttt{lorenkj@iastate.edu}}}
\date{\empty}

\begin{document}
\maketitle

\begin{abstract}
Graphs can be associated with a matrix according to some rule and we can find the spectrum of a graph with respect to that matrix. 
Two graphs are cospectral if they have the same spectrum.  Constructions of cospectral graphs help us establish patterns about structural information not preserved by the spectrum. We generalize a construction for cospectral graphs previously given for the distance Laplacian matrix to a larger family of graphs. 
In addition, we show that with appropriate assumptions this generalized construction extends to the adjacency matrix, combinatorial Laplacian matrix, signless Laplacian matrix, normalized Laplacian matrix, and distance matrix. 
\end{abstract}

\noindent \textbf{Key words.}
cospectral, generalized characteristic polynomial, similar matrices 

\noindent \textbf{AMS subject classifications.}
05C50, 05C12, 15A18, 15B57

\section{Introduction}
Let $G=(V,E)$ be a graph and $M$ a matrix associated with the graph. 
Then the spectrum (multiset of eigenvalues) of $M$, $\spec_M(G)$, is referred to as the spectrum of $G$ with respect to $M$. 
If $M$ is clear, we will say the spectrum of $G$, denoted $\spec(G)$. 
If two graphs $G_1,G_2$ share the same spectrum with respect to $M$, i.e. $\spec_M(G_1)=\spec_M(G_2)$, then we say that $G_1,G_2$ are cospectral graphs. 
When $G_1$ is not isomorphic to $G_2$, this is an interesting relationship between the two graphs because it gives us information into what structural properties of a graph are not preserved by $M$.
There are many possible choices for $M$, and each gives us different insight into the graph. 

Given $M$, examples of cospectral graphs are (usually) easy to find for a small number of vertices by exhaustive search. To find families of cospectral graphs on a large number of vertices we need constructions. These constructions help us understand how information about the structural proprieties of a graph are not determined by the spectrum and demonstrate weaknesses of a matrix. 

Cospectral constructions have been studied for the adjacency matrix \cite{GM}, combinatorial Laplacian \cite{vDH03, HS04}, signless Laplacian \cite{HS04}, normalized Laplacian \cite{BG11}, distance matrix \cite{heysse}, and to a lesser extent distance Laplacian matrix \cite{AH13,grwc}.
The adjacency matrix $A$ has a $1$ in the $i,j$ entry if there is an edge between vertices $i,j$ and $0$ otherwise. 
The combinatorial Laplacian is defined as $L=D-A$ where $D$ is the diagonal matrix with the degrees of the vertices down the diagonal. The signless Laplacian is defined as $|L|=D+A$. The normalized Laplacian is defined as $\mathcal{L}=D^{-1/2}LD^{-1/2}$. These matrices off diagonal zero-nonzero pattern is that of the adjacency matrix so we will refer to these matrices as adjacency matrices. 

The distance matrix $\mathcal{D}$ has entries $\mathcal{D}_{i,j}=\dist(i,j)$ where the distance is the length of the shortest path between vertex $i$ and vertex $j$. The distance Laplacian $\DL=T-\mathcal{D}$ where $T$ is the diagonal matrix with the transmission of the vertices (sum of the distances to a particular vertex) down the diagonal. When dealing with distance matrices, the graph is assumed to be connected. 

The most well known cospectral construction for graphs is Godsil-McKay switching for the adjacency matrix \cite{GM}. 
The construction is a special case of Seidel switching. 
Seidel switching on a graph $G$ with switching set $S$ produces a new graph $G'$ on the vertices of $G$ by keeping edges with both of the endpoints in either $S$ or $G\backslash S$, and adds an edge between vertices in $S$ and $G \backslash S$ if only if that edge was not in $G$. An example of this switching is shown in Figure \ref{fig:GM-Switching}. 

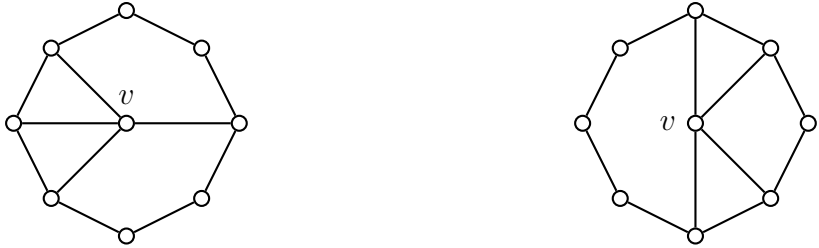
\begin{figure}[h]
    \centering
    \begin{tikzpicture}
    \node[vertex] (b1) at (0,0) {};
\node[vertex] (b2) at (0.5,1) {};
\node[vertex] (b3) at (1.5,1.5) {};
\node[vertex] (b4) at (2.5,1) {};
\node[vertex] (b5) at (3,0) {};
\node[vertex] (b6) at (0.5,-1) {};
\node[vertex] (b7) at (1.5,-1.5) {};
\node[vertex] (b8) at (2.5,-1) {};

\node[vertex] (a1) at (1.5,0)[label=above:$v$] {};

\draw[thick] (b1)--(b2)--(b3)--(b4)--(b5)--(b8)--(b7)--(b6)--(b1);
\draw[thick] (b1)--(a1)--(b2);
\draw[thick] (b6)--(a1)--(b5);
\end{tikzpicture}
\hspace{1.6in}
\begin{tikzpicture}
\node[vertex] (b1) at (0,0) {};
\node[vertex] (b2) at (0.5,1) {};
\node[vertex] (b3) at (1.5,1.5) {};
\node[vertex] (b4) at (2.5,1) {};
\node[vertex] (b5) at (3,0) {};
\node[vertex] (b6) at (0.5,-1) {};
\node[vertex] (b7) at (1.5,-1.5) {};
\node[vertex] (b8) at (2.5,-1) {};

\node[vertex] (a1) at (1.5,0)[label=left:$v$] {};

\draw[thick] (b1)--(b2)--(b3)--(b4)--(b5)--(b8)--(b7)--(b6)--(b1);
\draw[thick] (b3)--(a1)--(b4);
\draw[thick] (b7)--(a1)--(b8);
    \end{tikzpicture}
    \caption{Two graphs that are cospectral for the adjacency matrix by Godsil-McKay switching about $S=\{v\}$.}
    \label{fig:GM-Switching}
\end{figure}

Godsil-McKay switching puts conditions on the graph $G$ and the switching set to create a pair of cospectral graphs. Their proof consists of showing that a matrix $\mathcal{C}=\diag(\frac1k J-I,I)$ is a similarity matrix for the adjacency matrix of the two cospectral graphs. 
Haemers and Spence extended the construction to $L$ and $|L|$ by introducing the GM*-property and its relaxation for graphs \cite{HS04}. 
The GM*-property is a set of sufficient graph conditions such that $\mathcal{C}$ is a similarity matrix for matrices of the form $M=\alpha A + \beta I + \gamma D$. The relaxation of the GM*-property relaxes the graph conditions if the matrix has constant row sums. This extension is one of the most well known cospectral construction for the Laplacian and signless Laplacian matrices. 

This construction and others for well studied matrices can be thought of as a \emph{switch} of a set of edges \cite{AH12,GM, HS04,heysse,WQH19}. In this paper, we present a construction that \emph{swaps} a set of edges. The difference between these two concepts is in a switch exactly one endpoint of an edge changes while in a swap both endpoints of an edge change. 

Our result broadens a construction for the distance Laplacian matrix and extend it to the distance matrix, matrices of the form $M=\alpha A + \beta I + \gamma D$, and the normalized Laplacian. We do this by demonstrating graph conditions such that $\diag(\frac1k J -\hat{I},I)$ is a similarity matrix where $\hat{I}$  is the matrix with ones along the anti-diagonal and zeros elsewhere. This is the only known cospectral construction for both distance and adjacency matrices (when the diameter of the graph is greater than two). 

\begin{figure}[h]
    \centering
    \begin{tabular}{ccc}
       \begin{tikzpicture}[scale=0.8]
    \node[vertex] (a1) at (0,0) {};
    \node[vertex] (a2) at (1,0) {};
    \node[vertex] (a3) at (2,0) {};
    \node[vertex] (a4) at (1.5,1) {};
    
    \node[vertex] (b1) at (-1, 1)[label=left:$u_1$] {};
    \node[vertex] (b2) at (-1,0)[label=left:$u_2$] {};
    \node[vertex] (b3) at (-1,-1)[label=left:$u_3$] {};
    
    \node[vertex] (c1) at (3, 1)[label=right:$w_1$] {};
    \node[vertex] (c2) at (3,0)[label=right:$w_2$] {};
    \node[vertex] (c3) at (3,-1)[label=right:$w_3$] {};

    \draw[thick] (a1)--(a2)--(a3)--(a4)--(a2)
    (b1)--(a1)--(b2)
    (a1)--(b3)
    (c1)--(a3)--(c2)
    (a3)--(c3);
    \end{tikzpicture} &\begin{tikzpicture}[scale=0.8]
    \node[vertex] (a1) at (0,0) {};
    \node[vertex] (a2) at (1,0) {};
    \node[vertex] (a3) at (2,0) {};
    \node[vertex] (a4) at (1.5,1) {};
    
    \node[vertex] (b1) at (-1, 1)[label=left:$u_1$] {};
    \node[vertex] (b2) at (-1,0)[label=left:$u_2$] {};
    \node[vertex] (b3) at (-1,-1)[label=left:$u_3$] {};
    
    \node[vertex] (c1) at (3, 1)[label=right:$w_1$] {};
    \node[vertex] (c2) at (3,0)[label=right:$w_2$] {};
    \node[vertex] (c3) at (3,-1)[label=right:$w_3$] {};

    \draw[thick] (a1)--(a2)--(a3)--(a4)--(a2)
    (b1)--(a1)--(b2)
    (a1)--(b3)
    (c1)--(a3)--(c2)
    (a3)--(c3)
    (b1)--(b2)--(b3);
    \end{tikzpicture} &  \begin{tikzpicture}[scale=0.8]
    \node[vertex] (a1) at (0,0) {};
    \node[vertex] (a2) at (1,0) {};
    \node[vertex] (a3) at (2,0) {};
    \node[vertex] (a4) at (1.5,1) {};
    
    \node[vertex] (b1) at (-1, 1)[label=left:$u_1$] {};
    \node[vertex] (b2) at (-1,0)[label=left:$u_2$] {};
    \node[vertex] (b3) at (-1,-1)[label=left:$u_3$] {};
    
    \node[vertex] (c1) at (3, 1)[label=right:$w_1$] {};
    \node[vertex] (c2) at (3,0)[label=right:$w_2$] {};
    \node[vertex] (c3) at (3,-1)[label=right:$w_3$] {};

    \draw[thick] (a1)--(a2)--(a3)--(a4)--(a2)
    (b1)--(a1)--(b2)
    (a1)--(b3)
    (c1)--(a3)--(c2)
    (a3)--(c3)
    (c1)--(c2)--(c3);
    \end{tikzpicture}\\
    (a)& (b) & (c)
    \end{tabular}
    
    \caption{(a) A graph with isolated twin set $\{u_1,u_2,u_3\}$ and $\{w_1,w_2,w_3\}$ that are the same size and with degree $1$. By adding $2$ edges into each set, we produced cospectral graphs ((b) and (c)) for the Laplacian matrix. }
    \label{fig:LaplaFolklore}
\end{figure}
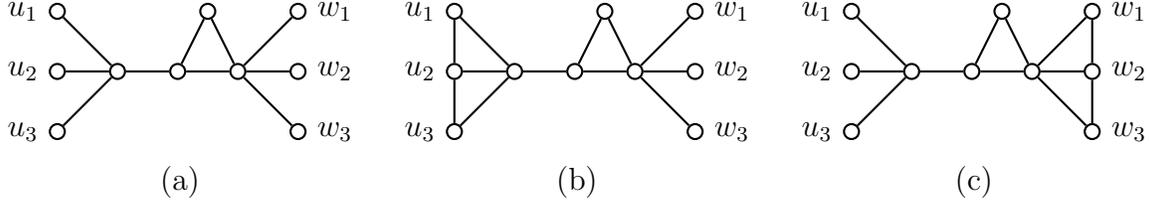

This construction exploits well known spectral properties of twin vertices in both distance and adjacency matrices. \emph{(Isolated) Twin vertices} are two vertices in a graph that have the same neighborhood (and thus are not connected to each other). In a distance or adjacency matrix representation of a graph, the columns (and rows) corresponding to twin vertices, $v_1,v_2$, have the same entries in each row (and column) corresponding to the other vertices in the graph. In other words, $M_{v_1,u}=M_{v_2,u}$ for all $u \in V \backslash \{v_1,v_2\}$. This allows $[1,-1, 0, \cdots, 0]^T$ to be an eigenvector of our matrix. Since these matrices are real symmetric matrices, it follows that our remaining eigenvectors can be chosen so the first two entries are the same. 

This leads to a very natural cospectral construction which is formally given for the combinatorial Laplacian matrix in \cite{das} and the distance Laplacian matrix in \cite{grwc}. For the combinatorial Laplacian matrix, the construction says that if we have two sets of isolated twin vertices of the same size and have the same degree, then adding $k$ edges into one set is cospectral to adding $k$ edges into the other set. In other words, these $k$ edges can be \emph{swapped} from one twin set to another. An example of this construction is shown in Figure \ref{fig:LaplaFolklore}.

We will show that the twin condition can be relaxed for the combinatorial Laplacian matrix when constructing cospectral graphs and can be broadened to include all adjacency and distance matrices. This relaxation is an extension of a cospectral construction for the distance Laplacian given in \cite{grwc} and an example is shown in Figure \ref{fig:coTransCous}. 

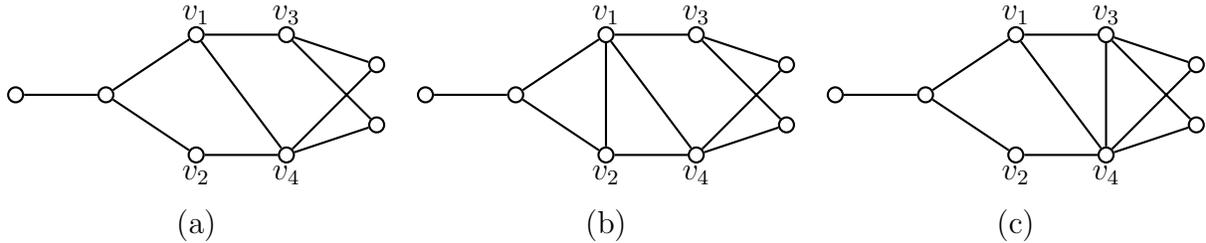
\begin{figure}[h]
    \centering
    \begin{tabular}{ccc}
        \begin{tikzpicture}[scale=0.4]

\node[vertex]  (v1) at (3,0) {};
\node[vertex]  (v2) at (6,2) {}; \node [above] at (6,2) {$v_1$};
\node[vertex] (v3) at (6,-2) {}; \node [below] at (6, -2) {$v_2$};
\node[vertex]  (v4) at (9,2) {}; \node [above] at (9, 2) {$v_3$};
\node[vertex]  (v5) at (9,-2) {}; \node [below] at (9,-2) {$v_4$};

\node[vertex]  (v0) at (0,0) {};
\node[vertex]  (v6) at (12,-1) {};
\node[vertex] (v7) at (12,1) {};

\draw[thick] (v0)--(v1)--(v2)--(v5)--(v3)--(v1)
(v2)--(v4)--(v6)--(v5)--(v7)--(v4);
\end{tikzpicture} & 
\begin{tikzpicture}[scale=0.4]

\node[vertex]  (v1) at (3,0) {};
\node[vertex]  (v2) at (6,2) {}; \node [above] at (6,2) {$v_1$};
\node[vertex] (v3) at (6,-2) {}; \node [below] at (6, -2) {$v_2$};
\node[vertex]  (v4) at (9,2) {}; \node [above] at (9, 2) {$v_3$};
\node[vertex]  (v5) at (9,-2) {}; \node [below] at (9,-2) {$v_4$};

\node[vertex]  (v0) at (0,0) {};
\node[vertex]  (v6) at (12,-1) {};
\node[vertex] (v7) at (12,1) {};

\draw[thick] (v0)--(v1)--(v2)--(v5)--(v3)--(v1)
(v2)--(v4)--(v6)--(v5)--(v7)--(v4)
(v2)--(v3);
\end{tikzpicture}&
\begin{tikzpicture}[scale=0.4]

\node[vertex]  (v1) at (3,0) {};
\node[vertex]  (v2) at (6,2) {}; \node [above] at (6,2) {$v_1$};
\node[vertex] (v3) at (6,-2) {}; \node [below] at (6, -2) {$v_2$};
\node[vertex]  (v4) at (9,2) {}; \node [above] at (9, 2) {$v_3$};
\node[vertex]  (v5) at (9,-2) {}; \node [below] at (9,-2) {$v_4$};

\node[vertex]  (v0) at (0,0) {};
\node[vertex]  (v6) at (12,-1) {};
\node[vertex] (v7) at (12,1) {};

\draw[thick] (v0)--(v1)--(v2)--(v5)--(v3)--(v1)
(v2)--(v4)--(v6)--(v5)--(v7)--(v4)
(v4)--(v5);
\end{tikzpicture}\\
     (a) & (b)    & (c)
    \end{tabular}
    \caption{(a) A graph $G$ with co-transmission cousin pair $\{v_1,v_2\}, \{v_3,v_4\}$. The graphs shown in (b) and (c) are the two graphs formed by \emph{swapping} edges to create a pair of cospectral graphs using the construction given in \cite{grwc}. }
    \label{fig:coTransCous}
\end{figure}

We will describe this construction in Section \ref{sec:construc} and give generalized definitions of sets of twin vertices called \emph{cousins}. These definitions allow us to show that a matrix $\mathcal{S}=\diag(\frac1k J- \hat{I},I)$ is a similarity matrix between graphs where we perform a \emph{swap} on the edges between the cousin vertices.

This construction method demonstrates a loss of information about the graph structure from many different matrix representations. To show this construction, we will first introduce \emph{cousin} vertices and prove some simple results about \emph{gluing} edges within cousins. Then we will prove some linear algebra results about real symmetric matrices to show that our graph construction preserves the spectrum of adjacency and distance matrices.

\section{Extension of Cousin Cospectral Construction} \label{sec:construc}

The cospectral construction given in \cite{grwc} which we wish to extend starts with a set of vertices with special proprieties called \emph{cousins}.

\begin{defn} \cite{grwc} \label{def:coTransCous2}
In a graph $G$, the set of vertices $C=\{v_1,v_2\} \cup \{v_3, v_4\}$ are called a set of \emph{co-transmission cousins} if 
\begin{enumerate}
    \item For all $u \in V(G) \backslash C$, $\dist(v_1,u)=\dist(v_2,u)$ and $\dist(v_3,u)=\dist(v_4,u)$;
    \item $\sum_{u\in V(G)\backslash C} \dist(v_1,u)= \sum_{u\in V(G)\backslash C} \dist(v_3,u)$.
\end{enumerate}
\end{defn}

We can think of them as two sets of twin vertices with the same transmission if we ignore adjacencies between our special four vertices. The construction of cospectral graphs for the distance Laplacian \emph{swaps} $K_2$ with $\overline{K_2}$ between $\{v_1,v_2\}$ and $\{v_3, v_4\}$ with some conditions to create a pair of cospectral graphs. An example of this construction is shown in Figure \ref{fig:coTransCous}.

Our construction extends \emph{swapping} of edges from a pair of two vertex sets to a pair of $m$ vertex sets. Therefore, we will first extend the definition of cousins. 

\begin{defn}
In a graph $G$, the pair of sets of vertices $U$ and $W$ are called \emph{cousins} if
\begin{enumerate}
    \item $|U|=|W|=m$;
    \item $\dist(u_i,v)=\dist(u_j,v)$ and $\dist(w_i,v)=\dist(w_j,v)$ for all $u_i,u_j \in U$, all $w_i,w_j \in W$, and all $v \in V(G) \backslash \{U \cup W\}$.
    
    Additionally, we would call a pair \emph{co-transmission cousins} if 
    \item $\displaystyle \sum_{v\in V(G)\backslash \{U \cup W\}}\dist(u_i,v)= \sum_{v\in V(G)\backslash \{U \cup W\}} \dist(w_j,v)$ for all $u_i \in U$ and all $w_j \in W$. 
\end{enumerate}
\end{defn}

 These definitions about pairs of sets of vertices will be used for our construction for the distance and distance Laplacian matrices. The next definitions about pairs of sets of vertices will be used for our construction for the adjacency, combinatorial Laplacian, and signless Laplacian matrices. 

\begin{defn}
In a graph $G$, the pair of sets of vertices $U$ and $W$ are called  \emph{relaxed cousins} if
\begin{enumerate}
    \item $|U|=|W|=m$;
    \item $u_i \sim v$ if and only if $u_j\sim v$ and $w_i \sim v$ if and only if $w_j\sim v$ for all $u_i,u_j \in U$, all $w_i, w_j \in W$, and all $v \in V(G) \backslash \{U \cup W\}$. 
    
Additionally, we would call a pair \emph{co-degree (relaxed) cousins} if 

\item $|N(u_i)\backslash W|=|N(w_j)\backslash U|$ for all $u_i \in U$ and all $w_j \in W$. 
\end{enumerate}
\end{defn}

We will note that a pair of relaxed cousins encompasses a pair of (co-transmission) cousins. Additionally, if there are no edges between sets $U,W$, then these must be a pair of sets of twin vertices. 

For our cospectral constructions, we will start with a base graph and \emph{glue} in two graphs two different ways creating cospectral graphs. This operation can formally be defined using maps and the following example demonstrates \emph{gluing} by using maps. 

\begin{ex}
Let $G$ be an empty graph on four vertices and let $H$ be the complete graph on three vertices. So $V(G)=\{v_1, v_2, v_3, v_4\}$ with $E(G)=\emptyset$ and $V(H)=\{u_1, u_2, u_3\}$ with $E(H)=\{u_1u_2, u_2u_3, u_3u_1\}$. 

Let $\phi:V(H)\to V(G)$ be an injective function such that $\phi(u_i)=v_i$. $\phi$ will be our \emph{gluing} map. A new graph \begin{align*}
    G'&=G+\phi(E(H))\\
    &=G+ \phi(\{u_1u_2, u_2u_3, u_3u_1\})\\
    &=G+ \{\phi(u_1u_2), \phi(u_2u_3), \phi(u_3u_1) \}\\
    &=G + \{v_1v_2, v_2v_3, v_3v_1\}
\end{align*} is the graph $K_3$ union with an isolated vertex.  We would say that $G'$ is $G$ with $H$ glued into $G$ with respect to $\phi$. 
\end{ex} 

An interesting property about cousins is that gluing into $U$ or $W$ does not change the length of a shortest path between two vertices $u,v$ where $v$ is not in $U$ or $W$. 

\begin{lem} \label{notInPath}
Let $G$ be a graph with cousins $V_1,V_2$ on $m$ vertices. Let $H_1,H_2$ be any two graphs on $m$ vertices and $\phi_{i,j}$ be a bijective mapping from $H_i$ to $V_j$ for $i,j \in\{1,2\}$. 
Let $G_1=G+\phi_{1,1}(E(H_1))+\phi_{2,2}(E(H_2))$ and $G_2=G+\phi_{2,1}(E(H_2))+\phi_{1,2}(E(H_1))$.

Then for all $u \not \in V_1 \cup V_2$, $v \in V(G)$, $\dist_{G_1}(u,v)= \dist_{G_2}(u,v)$.
\end{lem}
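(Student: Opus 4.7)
The plan is to prove the stronger claim that $\dist_{G_1}(u,v) = \dist_G(u,v)$ for every $u \notin V_1 \cup V_2$ and every $v \in V(G)$; the identical argument applied to $G_2$ then yields $\dist_{G_2}(u,v) = \dist_G(u,v)$, so the two distances coincide. The intuition is that the glued edges live entirely inside $V_1$ or entirely inside $V_2$, so they can only shortcut paths within a single cousin set. By the cousin property, every vertex of $V_1$ is at the same $G$-distance $d_1 := \dist_G(u, V_1)$ from $u$, and every vertex of $V_2$ is at distance $d_2 := \dist_G(u, V_2)$, so a BFS from $u$ in $G$ reaches every vertex of $V_1$ simultaneously at level $d_1$; no glued edge inside $V_1$ can make a vertex of $V_1$ accessible any earlier.

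I would carry this out by induction on the BFS level $\ell$, comparing $B_\ell := \{z \in V(G) : \dist_G(u,z) \leq \ell\}$ with $B^{G_1}_\ell := \{z \in V(G) : \dist_{G_1}(u,z) \leq \ell\}$. The inclusion $B_\ell \subseteq B^{G_1}_\ell$ is immediate because $G \subseteq G_1$, so only the reverse inclusion needs work. The base case $\ell = 0$ is trivial. For the inductive step, assume $B^{G_1}_{\ell-1} = B_{\ell-1}$ and take $z \in B^{G_1}_\ell$. If $z \in B_{\ell-1}$ we are done; otherwise $z$ has a $G_1$-neighbor $y \in B^{G_1}_{\ell-1} = B_{\ell-1}$. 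If the edge $yz$ belongs to $G$, then $z$ has a $G$-neighbor in $B_{\ell-1}$ and hence $z \in B_\ell$. If instead $yz$ is a glued edge, both endpoints lie in a single cousin set, say $V_1$. Then $y \in V_1 \cap B_{\ell-1}$ forces $d_1 = \dist_G(u,y) \leq \ell - 1$, and the cousin property gives $V_1 \subseteq B_{d_1} \subseteq B_{\ell-1}$, so $z \in V_1 \subseteq B_{\ell-1} \subseteq B_\ell$. The case of both endpoints in $V_2$ is entirely analogous.

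The main obstacle is precisely the glued-edge subcase; the whole argument hinges on the cousin property guaranteeing that once a BFS from $u$ touches any vertex of a cousin set, it has already reached them all. This in turn uses the fact that each $\phi_{i,j}$ maps $H_i$ bijectively into a single cousin set $V_j$, so glued edges are always contained in one of $V_1$ or $V_2$ and never bridge the two. Once this observation is in hand, the rest of the argument is standard BFS bookkeeping, and the same proof run with $G_2$ in place of $G_1$ closes the lemma.
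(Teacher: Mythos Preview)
Your proof is correct and rests on the same key observation as the paper's: a glued edge $(w_1,w_2)$ inside a cousin set cannot lie on a shortest path from $u\notin V_1\cup V_2$, because the cousin property forces $u$ to be equidistant from $w_1$ and $w_2$. The paper argues this directly by contradiction on a single shortest path, whereas you package it as a BFS level-by-level induction and prove the slightly stronger intermediate claim $\dist_{G_1}(u,v)=\dist_G(u,v)$; your framing is a bit more explicit about where the equidistance is being invoked (in $G$, via the inductive hypothesis $B^{G_1}_{\ell-1}=B_{\ell-1}$), but the underlying idea is the same.
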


\begin{proof}
Let $u \not \in V_1 \cup V_2$ and $v \in V(G)$. If a shortest path in $G_1$ does not contain an edge with both its endpoints in $V_1$ nor $V_2$ and a shortest path in $G_2$ does not contain an edge with both its endpoints in $V_1$ nor $V_2$, then the distances are the same, i.e. $\dist_{G_1}(u,v)=\dist_{G_2}(u,v)$ since $G_1$ and $G_2$ only differ by edges that have both of their endpoints in $V_1$ or $V_2$.

Let $\mathcal{P}$ be a shortest path in $G_1$ between $u,v$ that contains, without loss of generality, edge $(w_1,w_2)$ where $w_1,w_2 \in V_1$. We know for any subpath $\mathcal{P}' \subseteq \mathcal{P}$ that starts at vertex $x$ and ends at vertex $y$, then $\mathcal{P}'$ is a shortest path between $x$ and $y$. 

This implies that a shortest path between $u$ and $w_2$ uses edge $(w_1,w_2)$. 
Moreover, $\dist(u,w_1) +1=\dist (u,w_2)$ but every $u \not \in V_1 \cup V_2$ must be equidistant from vertices in $V_1$ (and $V_2$) since they are cousins. Therefore, there is no shortest path $\mathcal{P}$ that contains an edge with both of its endpoints in $V_1$. There is an analogous argument for $V_2$ and $G_2$. 

Thus the distance between $u,v$ is preserved from $G_1$ to $G_2$. 
\end{proof}

An additional interesting property of gluing is when we glue into a bipartite graph $G=(A \cup B, E)$ that has an automorphism $\pi$ where $\pi(A)=B$ and $\pi^2=id$, then gluing any graph $H$ into $A$ with map $\phi$ is isomorphic to gluing $H$ into $B$ with map $\pi(\phi)$. 

\begin{lem} \label{lem:isomorphism}
Let $G=(A \cup B, E)$ be a bipartite graph with independent sets $A,B$ on $m$ vertices such that there exists a graph automorphism $\pi$ where $\pi(A)=B$, $\pi(B)=A$, and $\pi^2=id$. 

Let $H_A$ and $H_B$ be any two graphs on $m$ vertices, $\phi_A$ be a bijective mapping from $V(H_A)$ to $V(A)$, and $\phi_B$ be a bijective mapping from $V(H_B)$ to $V(B)$. 

Then $G_1=G+\phi_A(E(H_A))+\phi_B(E(H_B))$ is isomorphic to $G_2=G+\pi(\phi_A(E(H_A)))+ \pi(\phi_B(E(H_B)))$.
\end{lem}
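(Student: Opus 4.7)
The plan is to exhibit $\pi$ itself, viewed as a permutation of $V(G)=V(G_1)=V(G_2)$, as a graph isomorphism from $G_1$ to $G_2$. Since $\pi$ is already a bijection on the common vertex set and $\pi^2=id$, the whole statement reduces to checking the edge condition $\pi(E(G_1))=E(G_2)$.

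First I would decompose the edge sets. Because $G$ is bipartite with parts $A,B$, every edge of $G$ has one endpoint in $A$ and one in $B$, whereas $\phi_A(E(H_A))$ consists of edges inside $A$ and $\phi_B(E(H_B))$ consists of edges inside $B$. Hence
\[
E(G_1)=E(G)\;\cup\;\phi_A(E(H_A))\;\cup\;\phi_B(E(H_B)),
\]
and this is a disjoint union. Similarly, using $\pi(A)=B$ and $\pi(B)=A$, the set $\pi(\phi_A(E(H_A)))$ lies inside $B$ and $\pi(\phi_B(E(H_B)))$ lies inside $A$, so
\[
E(G_2)=E(G)\;\cup\;\pi(\phi_A(E(H_A)))\;\cup\;\pi(\phi_B(E(H_B)))
\]
is also a disjoint union.

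Next I would apply $\pi$ to $E(G_1)$ one piece at a time. On $E(G)$, $\pi$ is a graph automorphism by hypothesis, so $\pi(E(G))=E(G)$. On the remaining two pieces, $\pi(\phi_A(E(H_A)))$ and $\pi(\phi_B(E(H_B)))$ are literally the two new edge sets adjoined in forming $G_2$. Combining, $\pi(E(G_1))=E(G_2)$, and since $\pi^2=id$ the map $\pi$ is a bijection on edges as well as vertices, giving the desired isomorphism $G_1\cong G_2$.

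The verification is almost entirely bookkeeping; the only point requiring a moment of care — not really an obstacle — is the disjointness of the three pieces in each edge-set decomposition, which is what allows the piecewise image computation to recover $E(G_2)$ exactly. Disjointness is immediate from the bipartiteness of $G$ together with $A\cap B=\emptyset$.
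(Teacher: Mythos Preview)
Your proposal is correct and follows essentially the same approach as the paper: both exhibit $\pi$ itself as the isomorphism by verifying $\pi(E(G_1))=E(G_2)$ piecewise, using that $\pi$ fixes $E(G)$ and carries the glued-in edge sets to those of $G_2$. Your version is slightly more careful in explicitly noting the disjointness of the three edge pieces (via bipartiteness), which justifies treating the image of the union as the union of the images, but the argument is otherwise the same.
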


\begin{proof}
Let $\pi$ be a graph automorphism of $G=A\cup B$ such that $\pi(A)=B$, $\pi(B)=A$, and $\pi^2=id$. 
Since $\pi$ is a graph automorphism of $G$, then $E(G)= \pi(E(G))$. Consider 

\begin{align*}
    \pi\left[E(G_1)\right]&=
    \pi\left[E(G)+\phi_A(E(H_A))+\phi_B(E(H_B))\right]\\
    &= \pi\left[E(G)\right] + \pi \left[\phi_A(E(H_A))\right]+ \pi\left[\phi_B(E(H_B))\right] \\
    &= E(G) + \pi[\phi_A(E(H_A))]+ \pi[\phi_B(E(H_B))] \\
    &=E(G_2), 
\end{align*}
and
\begin{align*}
    \pi\left[E(G_2)\right]&= \pi\left[E(G) + \pi(\phi_A(E(H_A)))+ \pi(\phi_B(E(H_B)))\right] \\
    &= \pi\left[E(G)\right] + \pi\left[\pi(\phi_A(E(H_A)))\right]+ \pi\left[\pi(\phi_B(E(H_B)))\right] \\
    &= E(G) + \phi_A(E(H_A))+ \phi_B(E(H_B)) \\
    &=E(G_1). 
\end{align*}
Therefore $\pi$ is a graph isomorphism of $G_1,G_2$. 
\end{proof}

In our construction method we will be using this idea of gluing graphs $H_A, H_B$ into $U,W$ in a base graph $G$ two different ways to create a pair of cospectral graphs. If there are initially no edges within $U$ or $W$, then the induced subgraph of G on vertices $U,W$, denoted $G[U \cup W]$, is a bipartite graph. 
As demonstrated in Lemma \ref{lem:isomorphism}, in a bipartite graph $G$, gluing graphs into $G$ in two different ways can create a graph isomorphism between the two new graphs. Therefore, when we glue $H_A,B_B$ in two different ways, the $2m \times 2m$ submatrices representing the vertices of $G[U \cup W]$ are permutation similar. We can always choose our labeling of the two new graphs such that the $2m \times 2m$ submatrix representing the vertices of $G[U \cup W]$ in their corresponding matrices are permutation similar by an anti-diagonal reflection.

\subsection{Linear Algebra Results}
We now discuss some linear algebra tools developed to operate with anti-diagonal reflections. 
When a $n \times n$ matrix $M$ is reflected along its anti-diagonal, we denote this with $\T{M}$ and has $(i,j)$-entry $m_{n-j+1,n-i+1}$. Let $\widehat{I}$ be the $n\times n$ matrix with ones along the anti-diagonal and zeros elsewhere.

\begin{lem} \label{anitDia}
Let $M$ be a matrix in $\mathbb{F}_{n \times n}$. Then $\T{M}=\widehat{I}M^T \widehat{I}$ and $\T{M}$ is similar to $M$.
\end{lem}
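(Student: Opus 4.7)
The plan is to prove the two claims in sequence, both of which follow from routine manipulations with the reversal permutation matrix $\widehat{I}$.

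First, I would verify the identity $\T{M} = \widehat{I} M^T \widehat{I}$ by a direct entry-wise computation. Using $(\widehat{I})_{a,b} = 1$ exactly when $a + b = n+1$ (and $0$ otherwise), the double sum
\[
(\widehat{I} M^T \widehat{I})_{i,j} \;=\; \sum_{k,\ell} (\widehat{I})_{i,k}\, (M^T)_{k,\ell}\, (\widehat{I})_{\ell, j}
\]
collapses to the single term with $k = n+1-i$ and $\ell = n+1-j$, giving $(M^T)_{n+1-i,\, n+1-j} = M_{n+1-j,\, n+1-i}$. This matches the definition of $\T{M}$ whose $(i,j)$-entry is $m_{n-j+1,\, n-i+1}$, so the two matrices agree entrywise.

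Second, for the similarity claim, I would observe that $\widehat{I}$ is a permutation matrix (the reversal permutation) that is symmetric and an involution, so $\widehat{I}^{-1} = \widehat{I}$. The identity from the first step therefore reads $\T{M} = \widehat{I}\, M^T\, \widehat{I}^{-1}$, which exhibits $\T{M}$ as similar to $M^T$. Invoking the classical fact that every square matrix over a field is similar to its transpose (e.g., via matching rational canonical forms, since $M$ and $M^T$ have the same invariant factors in the Smith normal form of $xI - M$), we conclude by transitivity that $\T{M}$ is similar to $M$.

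The step requiring the most care is just the index bookkeeping in the first entry-wise computation; the similarity step is routine once one recognizes $\widehat{I}$ as a self-inverse permutation matrix. I would note for the reader that in all applications in this paper the matrix $M$ is real symmetric, in which case $M^T = M$ and the similarity $\T{M} = \widehat{I} M \widehat{I}^{-1}$ is immediate without appeal to the general transpose-similarity theorem.
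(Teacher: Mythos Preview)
Your proof is correct and follows essentially the same approach as the paper: establish $\T{M} = \widehat{I}M^T\widehat{I}$ (you do this by a direct entrywise computation, the paper by interpreting left/right multiplication by $\widehat{I}$ as row/column reversal, which amounts to the same thing), then use $\widehat{I}^{-1} = \widehat{I}$ to get $\T{M} \sim M^T$ and invoke $M \sim M^T$. Your added remark that in the paper's applications $M$ is symmetric, making the transpose-similarity step trivial, is a nice observation not explicitly stated in the original.
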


\begin{proof}
Consider $x=[x_1, \dots, x_n]$. Therefore 
\begin{align*}
    \widehat{I}x^T&=[x_n, \dots, x_1]^T\\
    x\widehat{I}&=[x_n, \dots, x_1].
\end{align*}
In other words, $\widehat{I}$ reverses the order of a column or row vector when multiplied from the right or left respectively. Therefore $\widehat{I}M$ is the matrix $M$ but where each column is in reverse order (horizontal reflection). And $M\widehat{I}$ is the matrix $M$ but where each row is in reverse order (vertical reflection). 

Therefore, $\widehat{I}M\widehat{I}$ is the matrix $M$ but with all the columns and rows in reverse order. 
 
\begin{align*}
    \widehat{I}M\widehat{I}&=\begin{bmatrix}
m_{n,n} & \cdots & m_{n,1} \\
\vdots & \ddots & \vdots \\
m_{1,n} & \cdots & m_{1,1}
\end{bmatrix}
\end{align*}

To have $M$ be reflected along its anti-diagonal (thus have the anti-diagonal be stationary), we must take the transpose of $\widehat{I}M\widehat{I}$. Therefore $\T{M}=(\widehat{I}M\widehat{I})^T=\widehat{I}M^T\widehat{I}$ since $\widehat{I}$ is its own transpose. 

Note that $(\widehat{I})^2=I$ therefore $\widehat{I}$ is its own inverse. So $M^T$ is similar to $\T{M}$ and we know that $M$ is similar to $M^T$. Thus $M$ is similar to $\T{M}$.
\end{proof}

The next result gives an explicit matrix that only reflects a submatrix along its anti-diagonal given some conditions about the initial matrix. 

\begin{lem} \label{lem:similarity}
Let $M= \begin{bmatrix}
N & Q \\
Q^T & B
\end{bmatrix}$ be a symmetric $m \times m$ matrix with $2k \times 2k$ submatrix $N$ having constant row sums and every column of $Q$ is of the form $[p, \dots, p, r, \dots, r]^T$ where $p,r$ occur $k$ times each. Then, $M$ is similar to $M'=\begin{bmatrix}
\T{N} & Q \\
Q^T & B
\end{bmatrix}$. 
\end{lem}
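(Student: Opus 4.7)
The plan is to exhibit an explicit similarity, taking the block matrix $\mathcal{S}=\diag\!\bigl(\tfrac{1}{k}J-\widehat{I},\,I\bigr)$ flagged in the introduction and verifying that $\mathcal{S}M\mathcal{S}=M'$. The first step is to show that $P:=\tfrac{1}{k}J-\widehat{I}$ is an involution. Expanding $P^{2}$ and using $J^{2}=2kJ$ (since $J$ is $2k\times 2k$), $J\widehat{I}=\widehat{I}J=J$, and $\widehat{I}^{2}=I$ gives $P^{2}=\tfrac{2}{k}J-\tfrac{1}{k}J-\tfrac{1}{k}J+I=I$. Consequently $\mathcal{S}$ is its own inverse, and it suffices to check that $\mathcal{S}M\mathcal{S}=M'$ block-by-block.

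The bottom-right block is unchanged, so the content lies in the three remaining blocks. For the top-left I must show $PNP=\T{N}$. Expanding gives
\[
PNP=\tfrac{1}{k^{2}}JNJ-\tfrac{1}{k}JN\widehat{I}-\tfrac{1}{k}\widehat{I}NJ+\widehat{I}N\widehat{I}.
\]
Because $M$ is symmetric, so is $N$, and the constant row sum $s$ of $N$ also appears as a constant column sum; thus $JN=NJ=sJ$. Hence the first term equals $\tfrac{2s}{k}J$ and each cross term equals $\tfrac{s}{k}J$, so the three $J$-contributions cancel. The remaining term is $\widehat{I}N\widehat{I}=\widehat{I}N^{T}\widehat{I}=\T{N}$ by Lemma~\ref{anitDia}. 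For the top-right block I compute $PQ$ column-by-column: a column $q=[p,\dots,p,r,\dots,r]^{T}$, with each run of length $k$, satisfies $\tfrac{1}{k}Jq=[p+r,\dots,p+r]^{T}$ and $\widehat{I}q=[r,\dots,r,p,\dots,p]^{T}$, so $Pq=q$; hence $PQ=Q$. The bottom-left block gives $Q^{T}P=(PQ)^{T}=Q^{T}$.

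Assembling the blocks yields $\mathcal{S}M\mathcal{S}=M'$, and together with $\mathcal{S}^{-1}=\mathcal{S}$ this establishes the similarity. The main obstacle is the top-left identity: the coefficient $\tfrac{1}{k}$ in $P$ is delicately tuned so that the three $J$-contributions cancel exactly, which relies on $N$ having size precisely $2k\times 2k$ (so $J^{2}=2kJ$) together with the constant row sum hypothesis. The $Q$-blocks are essentially bookkeeping once the ``balanced column'' structure of $Q$ is unpacked.
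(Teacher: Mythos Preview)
Your proof is correct and follows essentially the same approach as the paper: both exhibit the similarity via $\mathcal{S}=\diag(\tfrac{1}{k}J-\widehat{I},I)$, verify $PNP=\T{N}$ by expanding and using the constant row sum of $N$ to cancel the $J$-terms, and check $PQ=Q$ column-by-column. You are slightly more explicit than the paper in verifying $P^{2}=I$ and in handling the bottom-left block, but the argument is the same.
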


\begin{proof}
Let $M$ be a matrix with the proprieties stated in the hypothesis. 
Consider the matrix 
\begin{align}\label{matrix}
\mathcal{S}=\begin{bmatrix}
\frac{1}{k}J_{2k}- \widehat{I_{2k}} & 0 \\
0 & I_{m-2k}
\end{bmatrix}
\end{align}
where $J$ is the all ones matrix. Observe that $\mathcal{S}$ is its own inverse and transpose. We will show that $\mathcal{S}$ is a similarity matrix for $M$ and $M'$. 

Therefore, we will show \begin{align*}
    \begin{bmatrix}
\frac{1}{k}J- \widehat{I} & 0 \\
0 & I
\end{bmatrix} \begin{bmatrix}
N & Q \\
Q^T & B
\end{bmatrix}
    \begin{bmatrix}
\frac{1}{k}J- \widehat{I} & 0 \\
0 & I
\end{bmatrix}= \begin{bmatrix}
\T{N} & Q \\
Q^T & B
\end{bmatrix}.
\end{align*}

First, we will show that $\left(\frac{1}{k}J- \widehat{I}\right)N\left(\frac{1}{k}J- \widehat{I}\right)=\T{N}$ and then $\left(\frac{1}{k}J- \widehat{I}\right)Q=Q$. 

We know that $N$ is a symmetric $2k \times 2k$ matrix with constant row and column sums equal to $\alpha$.  Additionally, since $\widehat{I}$ reverses the columns or rows of a matrix, it follows that $J\widehat{I}=J$ and $\widehat{I}J=J$. Using these facts and Lemma \ref{anitDia}, consider the following. 

\begin{align*}
    \left(\frac{1}{k}J- \widehat{I}\right)N\left(\frac{1}{k}J- \widehat{I}\right)
    &=\frac{1}{k^2}JNJ-\frac{1}{k}JN \widehat{I}-\frac{1}{k}\widehat{I}NJ+\widehat{I}N\widehat{I}\\
    &=\frac{2k\alpha}{k^2}J-\frac{\alpha}{k}J-\frac{\alpha}{k}J+ \widehat{I}N\widehat{I}\\
    &=\widehat{I}N\widehat{I}\\
    &=\widehat{I}N^T\widehat{I}\\
    &=\T{N}.
\end{align*}

The columns of $Q$ all have the form $[p, \dots, p, r, \dots, r]^T$ where $p,r$ occur $k$ times each. Therefore

\begin{align*}
    (\frac{1}{k}J-\widehat{I})[p, \dots, p, r, \dots, r]^T &= \frac{1}{k}J [p, \dots, p, r, \dots, r]^T- \widehat{I}[p, \dots, p, r, \dots, r]^T \\
    &= \frac{1}{k}[k(p+r), \dots, k(p+r)]^T - [r, \dots, r, p, \dots, p]^T\\
    &= [p, \dots, p, r, \dots, r]^T.
\end{align*}
\end{proof}

We now are ready to provide graph structural conditions that create cospectral graphs that have $\mathcal{S}$ (as defined in (1)) as a similarity matrix. 

\subsection{Construction method}

The method of creating cospectral graphs for the distance Laplacian given in \cite{grwc} not only generalizes to larger families of graphs, it also extends to other matrices. This is because the proof used the fact that the distance Laplacian has constant row sums equal to zero. Thus it is very natural to extend it to the combinatorial Laplacian. 

The constructions outline necessary graph conditions such that Lemma \ref{lem:similarity} can be applied to the matrix. Therefore, our argument will be that the columns of $Q$ are of the appropriate form and that gluing our graphs $H_1, H_2$ into $G$ in two ways results in the anti-diagonal flip of $N$. 

Let $G[U]$ be the induced subgraph of $G$ on the vertices of $U \subseteq V(G)$. 

\begin{thm} \label{cousinConst}
Let $G$ be graph containing two vertex sets $V_1, V_2$ each on $m$ vertices such that 

\begin{enumerate}
    \item $G[V_1],G[V_2]$ are empty subgraphs;
    \item there exists a graph automorphism $\pi$ for $G[V_1 \cup V_2]$ such that $\pi(V_1)=V_2$, $\pi(V_2)=V_1$, and $\pi^2=id$. 
\end{enumerate}
Let $H_1$ and $H_2$ be any two graphs on $m$ vertices and $\phi_i$ be a bijective mapping from $H_i$ to $V_i$ for $i \in \{1,2\}$. Let $G_1=G+ \phi_{1}(E(H_1)) + \phi_{2}(E(H_2)))$  and $G_2=G+ \pi(\phi_{1}(E(H_1))) + \pi(\phi_{2}(E(H_2))))$. 

\begin{itemize}
    \item If $V_1$ and $V_2$ are co-transmission cousins, then $G_1$ and $G_2$ are cospectral for the distance Laplacian matrix.
    
    \item If $V_1$ and $V_2$ are cousins and $G_1[V_1 \cup V_2]$ is a transmission regular graph, then $G_1$ and $G_2$ are cospectral for the distance matrix.
    
    \item If $V_1$ and $V_2$ are co-degree cousins, then $G_1$ and $G_2$ are cospectral for the combinatorial Laplacian matrix. 
    
    \item If $V_1$ and $V_2$ are co-degree cousins and $G_1[V_1 \cup V_2]$ is a regular graph, then $G_1$ and $G_2$ are cospectral for the signless Laplacian matrix.
    
    \item If $V_1$ and $V_2$ are relaxed cousins and $G_1[V_1 \cup V_2]$ is a regular graph, then $G_1$ and $G_2$ are cospectral for the adjacency matrix. 
\end{itemize}  
\end{thm}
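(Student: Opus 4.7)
The plan is to apply Lemma~\ref{lem:similarity} to $M(G_1)$ (where $M$ denotes whichever of $A,L,|L|,\mathcal{D},\DL$ is under consideration) and identify its conjugate under $\mathcal{S}$ with $M(G_2)$, so that the two matrices are similar and hence cospectral. First, fix an ordering of $V(G)$ in which $V_1\cup V_2$ occupies the first $2m$ positions and the involution $\pi$ acts on $\{1,\ldots,2m\}$ as the anti-diagonal permutation $i\mapsto 2m+1-i$. Writing each relevant matrix in block form
\[
M(G_k)=\begin{pmatrix} N_k & Q_k \\ Q_k^T & B_k \end{pmatrix} \qquad (k=1,2),
\]
with $N_k$ indexed by $V_1\cup V_2$, I claim $Q_1=Q_2=:Q$ and $B_1=B_2=:B$: gluing only inserts edges within $V_1$ or within $V_2$, so the adjacency-type matrices coincide outside $N_k$, and Lemma~\ref{notInPath} preserves all distances involving at least one vertex outside $V_1\cup V_2$ for the distance-type matrices.

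Next I would verify the two hypotheses of Lemma~\ref{lem:similarity} for $M(G_1)$. The (relaxed) cousin condition ensures that each column of $Q$ is constant on the $V_1$-rows and constant on the $V_2$-rows, giving the $[p,\ldots,p,r,\ldots,r]^T$ shape. Constancy of the row sums of $N_1$ is then checked case by case: for $\DL$ and $L$ the full row sums vanish, so the row sum of $N_1$ at a vertex $v_i$ equals the negation of the corresponding row sum of $Q$, which reduces to $\sum_{w\notin V_1\cup V_2}\dist(v_i,w)$ (constant by co-transmission) or to $|N_G(v_i)\setminus(V_1\cup V_2)|$ (constant by co-degree, using independence of $V_1,V_2$ in $G$); for $A$ it is the degree of $v_i$ in $G_1[V_1\cup V_2]$, constant by regularity; for $|L|$ it combines those two contributions; and for $\mathcal{D}$ it is the transmission of $v_i$ in $G_1[V_1\cup V_2]$, constant by the transmission regularity hypothesis.

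Lemma~\ref{lem:similarity} will then give $M(G_1)\sim M'=\begin{pmatrix}\T{N_1} & Q\\ Q^T & B\end{pmatrix}$, and the final task is to identify $M'$ with $M(G_2)$, i.e., to prove $\T{N_1}=N_2$. Because $\pi$ is a graph automorphism of $G[V_1\cup V_2]$, the calculation of Lemma~\ref{lem:isomorphism} applied to this induced subgraph shows that $\pi$ gives a graph isomorphism $G_1[V_1\cup V_2]\to G_2[V_1\cup V_2]$, and in the chosen labeling the corresponding permutation matrix is $\widehat{I}_{2m}$. Lemma~\ref{anitDia} then yields $N_2=\widehat{I}\,N_1\,\widehat{I}=\T{N_1}$ for the adjacency matrix; for $L$ and $|L|$ the diagonals also match, the outside-degree part by co-degree and the inside-degree part by the subgraph automorphism.

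The expected main obstacle is proving $\T{N_1}=N_2$ in the distance and distance Laplacian cases, since the off-diagonal entries of $N_k$ are distances in the full graph $G_k$ and a shortest path between two members of $V_1\cup V_2$ can detour through outside vertices. Naively applying $\pi$ to such a path fails because $\pi$ need not preserve edges that cross between $V_1\cup V_2$ and its complement. However, Lemma~\ref{notInPath} freezes all distances involving outside vertices, and the cousin property equalizes distances from each outside $w$ to all of $V_1$ and separately to all of $V_2$, so the length of any outside detour is determined by data common to $G_1$ and $G_2$; combining this symmetry with the subgraph isomorphism supplied by $\pi$ should yield the reflection identity entry by entry, after which Lemma~\ref{lem:similarity} delivers the cospectrality in every listed case.
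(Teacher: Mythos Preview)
Your proposal follows essentially the same route as the paper: partition each matrix into blocks indexed by $V_1\cup V_2$ and its complement, verify the hypotheses of Lemma~\ref{lem:similarity} case by case (columns of $Q$ via the cousin conditions, constant row sums of $N_1$ via zero row sums for $L,\DL$ and via the regularity/transmission-regularity assumptions for $A,|L|,\mathcal{D}$), and then identify $\T{N_1}=N_2$ using the $\pi$-isomorphism $G_1[V_1\cup V_2]\cong G_2[V_1\cup V_2]$ supplied by Lemma~\ref{lem:isomorphism} under the anti-diagonal labeling. You are in fact more explicit than the paper about the one delicate point---that in the distance and distance-Laplacian cases the entries of $N_k$ are distances in the full graph $G_k$ rather than in the induced subgraph, so outside detours must be controlled---whereas the paper simply asserts that the subgraph isomorphism yields permutation similarity of $M_1,M_2$ without writing out the detour argument you sketch.
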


\begin{proof}
First consider the cases for the distance matrices where $V_1,V_2$ are cousins (since co-transmission cousins are a special case of cousins).  
By Lemma \ref{notInPath}, we know that no shortest path uses an edge with both of its endpoints in $V_1$ or $V_2$. Thus no distance between pairs of vertices where at least one is not in $V_1$ nor $V_2$ changes. 

In the case of relaxed or co-degree cousins, we note that no adjacency changes between a pair of vertices where at least one is not in $V_1$ nor $V_2$ when we add edges with both of its endpoints in $V_1$ or $V_2$. 

So we can partition the respective matrix $M$ of the two graphs $G_1,G_2$ into \begin{align*} \begin{bmatrix}
M_1 & Q \\
Q^T & B
\end{bmatrix} and \begin{bmatrix}
M_2 & Q \\
Q^T & B
\end{bmatrix}\end{align*} 
where $M_1,M_2$ are $2m \times 2m$ submatrices that are indexed by the vertices in $V_1$ followed by the vertices in $V_2$. 

We claim that with the appropriate labeling $M_1$ and $M_2$ are permutation similar submatrices where the permutation is an anti-diagonal reflection. In other words, $\T{M_1}=M_2$. 

Let $\pi$ be a graph automorphism of $G[V_1 \cup V_2]$ such that $\pi(V_1)=V_2$, $\pi(V_2)=V_1$, and $\pi^2=id$. 
We can relabel the vertices of $V_2$ in $G$ such that $\pi(v_{1,i})=v_{2,m-i+1}$ and $\pi(v_{2,i})=v_{1,m-i+1}$ for $v_{1,i} \in V_1$ and $v_{2,i} \in V_2$ since $\pi$ is an involution.
By Lemma \ref{lem:isomorphism} $\pi$ is a graph isomorphism of $G_1[V_1\cup V_2],G_2[V_1 \cup V_2]$ and $M_1,M_2$ are permutation similar matrices. 
Since we chose $\pi$ such that $\pi(v_{1,i})=v_{2,m-i+1}$ and $\pi(v_{2,i})=v_{1,m-i+1}$, it follows that $\T{M_1}$ and $M_2$ are equivalent.

For each case, we claim that $M_1$ (and $M_2$) is a symmetric $2m \times 2m$ matrix with constant row and column sums and the columns of $Q$ all have the form $[p, \dots, p, r, \dots, r]^T$ where $p, r$ appear $m$-times. 

\begin{itemize}
    \item For the distance Laplacian matrix, we know $V_1,V_2$ are co-transmission cousins which means that $Q$ has constant row sums and has columns of the form $[p, \dots, p, r, \dots, r]^T$ where $p, r$ appear $m$-times. And the distance Laplacian has row sums equal to zero, therefore $M_1$ has constant row sums. 
    
    \item For the distance matrix, we know $V_1,V_2$ are cousins which means that $Q$ has columns of the form $[p, \dots, p, r, \dots, r]^T$ where $p, r$ appear $m$-times. Since $G_1[V_1 \cup V_2]$ is transmission regular, it follows that $M_1$ has constant row sums.
    
    \item For the combinatorial Laplacian matrix, we know $V_1,V_2$ are co-degree cousins which means that $Q$ has constant row sums and has columns of the form $[p, \dots, p, r, \dots, r]^T$ where $p, r\in \{0,-1\}$ appear $m$-times.  And the combinatorial Laplacian has row sums equal to zero, therefore $M_1$ has constant row sums. 
    
    \item For the signless Laplacian matrix, we know $V_1, V_2$ are co-degree cousins which means that $Q$ has constant row sums and has columns of the form $[p, \dots, p, r, \dots, r]^T$ where $p, r\in \{0,1\}$ appear $m$-times. Since $G_1[V_1 \cup V_2]$ is a regular graph, the sum of the rows using only the non-diagonal entries of $M_1$ is constant. We know that the diagonal entries of $M_1$ are the sums of the non diagonal entries of $[M_1, Q]$. This is a constant because the sum non-diagonal entries of $M_1$ is constant and $Q$ has constant row sums. Therefore $M_1$ has constant diagonal entries and moreover constant row sums. 
    
    
    
    \item For the adjacency matrix, we know $V_1, V_2$ are relaxed cousins which means that $Q$ has columns of the form $[p, \dots, p, r, \dots, r]^T$ where $p, r\in \{0,1\}$ appear $m$-times. Since $G_1[V_1 \cup V_2]$ is a regular graph, it follows that $M_1$ has constant row sums. 
\end{itemize}

Thus by Lemma \ref{lem:similarity}, $\mathcal{S}$ is a similarity matrix for $M$ of $G_1,G_2$. Therefore $G_1,G_2$ are cospectral for $M$. 
\end{proof}

This allows us to create many different cospectral graphs on several matrices. Next we present some examples of this construction.

\begin{ex}
Consider the graphs in Figure \ref{fig:DL2}. The graph in (a) is a graph $G$  with co-transmission cousins $V_1=\{v_{1,1}, v_{1,2}, v_{1,3}, v_{1,4}\}$ and $V_2=\{v_{2,1}, v_{2,2}, v_{2,3}, v_{2,4}\}$. Let $\pi$ be a map from $V_1 \cup V_2 \to V_1 \cup V_2$ where $\pi(v_{1,j})=v_{2,5-j}$ and $\pi(v_{2,j})=v_{1,5-j}$. For our graph $G$ we can see that this is a graph automorphism for $G[V_1 \cup V_2]$. 

Therefore, by Theorem \ref{cousinConst} we can glue any two graphs on four vertices into $V_1, V_2$ and then into $V_2, V_1$ with respect to $\pi$ to create a pair of cospectral graphs for the distance Laplacian.  

In (b) and (c) we have glued the paw graph and $K_2$ into the cousin sets in two different ways to create cospectral graphs. 
\end{ex}
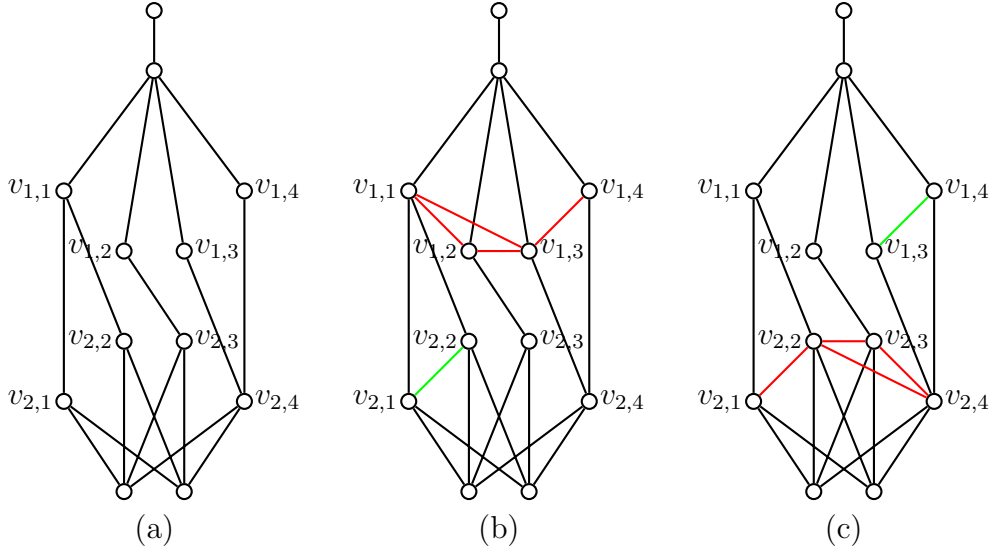
\begin{figure}[h]
    \centering
    \begin{tabular}{ccc}
        \begin{tikzpicture}[scale=.8]
    \node[vertex] (w1) at (0,8) {};
    \node[vertex] (w2) at (0,7) {};
    
    \node[vertex] (v4) at (1.5,5) {};
    \node[vertex] (v1) at (-1.5, 5) {};
    \node[vertex] (v3) at (0.5, 4) {};
    \node[vertex] (v2) at (-0.5, 4) {};
    
    \node[vertex] (u3) at (0.5, 2.5) {};
    \node[vertex] (u1) at (-1.5, 1.5) {};
    \node[vertex] (u4) at (1.5, 1.5) {};
    \node[vertex] (u2) at (-0.5, 2.5) {};
    
    \node[vertex] (w3) at (.5, 0) {};
    \node[vertex] (w4) at (-.5, 0) {};
    
    \node[left] at (v1) {$v_{1,1}$};
    \node[left] at (v2) {$v_{1,2}$};
    \node[right] at (v3) {$v_{1,3}$};
    \node[right] at (v4) {$v_{1,4}$};
    
    \node[right] at (u4) {$v_{2,4}$};
    \node[right] at (u3) {$v_{2,3}$};
    \node[left] at (u2) {$v_{2,2}$};
    \node[left] at (u1) {$v_{2,1}$};
    
    \draw[thick] (w1)--(w2)--(v1)
    (v3)--(w2)--(v2)
    (w2)--(v4)
    (w3)--(u1)--(w4)
    (w3)--(u2)--(w4)
    (w3)--(u3)--(w4)
    (w3)--(u4)--(w4);
    
    \draw[thick] (u2)--(v1)--(u1)
    (u3)--(v2)
    (v3)--(u4)--(v4);
    
    \end{tikzpicture} &  \begin{tikzpicture}[scale=.8]
    \node[vertex] (w1) at (0,8) {};
    \node[vertex] (w2) at (0,7) {};
    
    \node[vertex] (v4) at (1.5,5) {};
    \node[vertex] (v1) at (-1.5, 5) {};
    \node[vertex] (v3) at (0.5, 4) {};
    \node[vertex] (v2) at (-0.5, 4) {};
    
    \node[vertex] (u3) at (0.5, 2.5) {};
    \node[vertex] (u1) at (-1.5, 1.5) {};
    \node[vertex] (u4) at (1.5, 1.5) {};
    \node[vertex] (u2) at (-0.5, 2.5) {};
    
    \node[vertex] (w3) at (.5, 0) {};
    \node[vertex] (w4) at (-.5, 0) {};
    
    \node[left] at (v1) {$v_{1,1}$};
    \node[left] at (v2) {$v_{1,2}$};
    \node[right] at (v3) {$v_{1,3}$};
    \node[right] at (v4) {$v_{1,4}$};
    
    \node[right] at (u4) {$v_{2,4}$};
    \node[right] at (u3) {$v_{2,3}$};
    \node[left] at (u2) {$v_{2,2}$};
    \node[left] at (u1) {$v_{2,1}$};
    
    \draw[thick] (w1)--(w2)--(v1)
    (v3)--(w2)--(v2)
    (w2)--(v4)
    (w3)--(u1)--(w4)
    (w3)--(u2)--(w4)
    (w3)--(u3)--(w4)
    (w3)--(u4)--(w4);
    
    \draw[thick] (u2)--(v1)--(u1)
    (u3)--(v2)
    (v3)--(u4)--(v4);
    
    \draw[thick, red] (v3)--(v2)--(v1)--(v3)--(v4);
    \draw[thick, green] (u1)--(u2);
    \end{tikzpicture} & \begin{tikzpicture}[scale=.8]
   \node[vertex] (w1) at (0,8) {};
    \node[vertex] (w2) at (0,7) {};
    
    \node[vertex] (v4) at (1.5,5) {};
    \node[vertex] (v1) at (-1.5, 5) {};
    \node[vertex] (v3) at (0.5, 4) {};
    \node[vertex] (v2) at (-0.5, 4) {};
    
    \node[vertex] (u3) at (0.5, 2.5) {};
    \node[vertex] (u1) at (-1.5, 1.5) {};
    \node[vertex] (u4) at (1.5, 1.5) {};
    \node[vertex] (u2) at (-0.5, 2.5) {};
    
    \node[vertex] (w3) at (.5, 0) {};
    \node[vertex] (w4) at (-.5, 0) {};
    
    \node[left] at (v1) {$v_{1,1}$};
    \node[left] at (v2) {$v_{1,2}$};
    \node[right] at (v3) {$v_{1,3}$};
    \node[right] at (v4) {$v_{1,4}$};
    
    \node[right] at (u4) {$v_{2,4}$};
    \node[right] at (u3) {$v_{2,3}$};
    \node[left] at (u2) {$v_{2,2}$};
    \node[left] at (u1) {$v_{2,1}$};
    
    \draw[thick] (w1)--(w2)--(v1)
    (v3)--(w2)--(v2)
    (w2)--(v4)
    (w3)--(u1)--(w4)
    (w3)--(u2)--(w4)
    (w3)--(u3)--(w4)
    (w3)--(u4)--(w4);
    
    \draw[thick] (u2)--(v1)--(u1)
    (u3)--(v2)
    (v3)--(u4)--(v4);
    
    \draw[thick, red] (u2)--(u3)--(u4)--(u2)--(u1);
    \draw[thick, green] (v4)--(v3);
    \end{tikzpicture}\\
      (a)   &  (b) & (c)
    \end{tabular}
    \caption{(a) A graph $G$ with $V_1=\{v_{1,1}, v_{1,2}, v_{1,3}, v_{1,4}\}$ a set of co-transmission cousins to $V_2=\{v_{2,1}, v_{2,2}, v_{2,3}, v_{2,4}\}$. (b) The graph constructed by $G$ with $H_1$ (paw graph) glued into $V_1$ and $H_2=K_2+\{v_3,v_4\}$ glued into $V_2$. (c) The graph constructed by $G$ with $H_2$ glued into $V_1$ and $H_1$ glued into $V_2$. By Theorem \ref{cousinConst} the graphs show in (b) and (c) are cospectral for the distance Laplacian.}
    \label{fig:DL2}
\end{figure}

\begin{ex}
Consider the graphs in Figure \ref{fig:transmission regular}. The graph in (a) is a graph $G$ with co-degree cousins $V_1=\{v_{1,1}, v_{1,2}, v_{1,3}, v_{1,4}, v_{1,5}, v_{1,6}\}$ and $V_2=\{v_{2,1}, v_{2,2}, v_{2,3}, v_{2,4}, v_{2,5}, v_{1,6}\}$. Let $\pi$ be a map from $V_1 \cup V_2 \to V_1 \cup V_2$ where $\pi(v_{1,j})=v_{2,7-j}$ and $\pi(v_{2,j})=v_{1,7-j}$. For our graph $G$ we can see that this a graph automorphism for $G[V_1 \cup V_2]$. 

Therefore, by Theorem \ref{cousinConst} we can glue any two graphs on six vertices into $V_1,V_2$ and then into $V_2, V_1$ with respect to $\pi$ to create a pair of cospectral graphs for the combinatorial Laplacian. 

Since we glued in two non-isomorphic $3$-regular graphs such that $G_1[V_1 \cup V_2]$ in (b) is a regular graph, we also create a pair of cospectral graphs for the signless Laplacian and adjacency matrix. Additionally, since $G_1[V_1 \cup V_2]$ has diameter $2$ and is regular, it is also transmission regular. Therefore, this pair of graphs in (b) and (c) are also cospectral for the distance matrix. 
\end{ex}

We can have pairs of cospectral graphs using Theorem \ref{cousinConst} for the adjacency matrix without being cospectral for the signless Laplacian. Figure \ref{fig:CousWithRegularity} gives such a pair.

Since the conditions for the signless Laplacian construction are a special case of the conditions for the adjacency matrix and a special case of the combinatorial Laplacian construction, it follows that anytime we have a pair of graphs that are cospectral for the signless Laplacian using Theorem \ref{cousinConst} these graphs are also cospectral for the adjacency matrix and the combinatorial Laplacian.

\begin{figure}[h]
    \centering
    \begin{tabular}{ccc}
        \begin{tikzpicture}[scale=.8]
    \node[vertex] (w1) at (0,8) {};
    \node[vertex] (w2) at (0,7) {};
    
    \node[vertex] (v1) at (2, 5) {};
    \node[vertex] (v2) at (0, 5) {};
    \node[vertex] (v3) at (-2, 5) {};
    \node[vertex] (v4) at (-2,4) {};
    \node[vertex] (v5) at (0, 4) {};
    \node[vertex] (v6) at (2,4) {};
    
    \node[vertex] (u1) at (2, 1) {};
    \node[vertex] (u2) at (0, 1) {};
    \node[vertex] (u3) at (-2, 1) {};
    \node[vertex] (u4) at (-2,0) {};
    \node[vertex] (u5) at (0, 0) {};
    \node[vertex] (u6) at (2,0) {};
    
     \node[vertex] (w3) at (0, -2) {};
    
    \node[above] at (v1) {$v_{1,1}$};
    \node[above] at (v2) {$v_{1,2}$};
    \node[above] at (v3) {$v_{1,3}$};
    \node[below] at (v4) {$v_{1,4}$};
    \node[below] at (v5) {$v_{1,5}$};
    \node[below] at (v6) {$v_{1,6}$};
    
    \node[below] at (u6) {$v_{2,1}$};
    \node[below] at (u5) {$v_{2,2}$};
    \node[below] at (u4) {$v_{2,3}$};
    \node[above] at (u3) {$v_{2,4}$};
    \node[above] at (u2) {$v_{2,5}$};
    \node[above] at (u1) {$v_{2,6}$};
    
    \draw[thick] (0,4.5) ellipse (3cm and 1.5cm);
    \draw[thick] (0,0.5) ellipse (3cm and 1.5cm);
    
    \draw[thick] (w1)--(w2);
    
    \draw[line width=4pt]
    (0,6)--(w2)
    (0,-1)--(w3)
    (0, 3) -- (0,2);
    
    \end{tikzpicture} & \begin{tikzpicture}[scale=.8]
    \node[vertex] (w1) at (0,8) {};
    \node[vertex] (w2) at (0,7) {};
    
    \node[vertex] (v1) at (2, 5) {};
    \node[vertex] (v2) at (0, 5) {};
    \node[vertex] (v3) at (-2, 5) {};
    \node[vertex] (v4) at (-2,4) {};
    \node[vertex] (v5) at (0, 4) {};
    \node[vertex] (v6) at (2,4) {};
    
    \node[vertex] (u1) at (2, 1) {};
    \node[vertex] (u2) at (0, 1) {};
    \node[vertex] (u3) at (-2, 1) {};
    \node[vertex] (u4) at (-2,0) {};
    \node[vertex] (u5) at (0, 0) {};
    \node[vertex] (u6) at (2,0) {};
    
     \node[vertex] (w3) at (0, -2) {};
    
    \node[above] at (v1) {$v_{1,1}$};
    \node[above] at (v2) {$v_{1,2}$};
    \node[above] at (v3) {$v_{1,3}$};
    \node[below] at (v4) {$v_{1,4}$};
    \node[below] at (v5) {$v_{1,5}$};
    \node[below] at (v6) {$v_{1,6}$};
    
    \node[below] at (u6) {$v_{2,1}$};
    \node[below] at (u5) {$v_{2,2}$};
    \node[below] at (u4) {$v_{2,3}$};
    \node[above] at (u3) {$v_{2,4}$};
    \node[above] at (u2) {$v_{2,5}$};
    \node[above] at (u1) {$v_{2,6}$};
    
    \draw[thick] (0,4.5) ellipse (3cm and 1.5cm);
    \draw[thick] (0,0.5) ellipse (3cm and 1.5cm);
    
    \draw[thick] (w1)--(w2);
    
    \draw[line width=4pt]
    (0,6)--(w2)
    (0,-1)--(w3)
    (0, 3) -- (0,2);
    
    \draw[thick, red] (v1)--(v2)--(v3)--(v4)--(v5)--(v6)--(v1)--(v4)
    (v3)--(v6)
    (v2)--(v5);
    \draw[thick, green] (u1)--(u2)--(u6)--(u3)--(u2)
    (u6)--(u5)--(u1)--(u4)--(u5)
    (u3)--(u4);
    \end{tikzpicture}& \begin{tikzpicture}[scale=.8]
    \node[vertex] (w1) at (0,8) {};
    \node[vertex] (w2) at (0,7) {};
    
    \node[vertex] (v1) at (2, 5) {};
    \node[vertex] (v2) at (0, 5) {};
    \node[vertex] (v3) at (-2, 5) {};
    \node[vertex] (v4) at (-2,4) {};
    \node[vertex] (v5) at (0, 4) {};
    \node[vertex] (v6) at (2,4) {};
    
    \node[vertex] (u1) at (2, 1) {};
    \node[vertex] (u2) at (0, 1) {};
    \node[vertex] (u3) at (-2, 1) {};
    \node[vertex] (u4) at (-2,0) {};
    \node[vertex] (u5) at (0, 0) {};
    \node[vertex] (u6) at (2,0) {};
    
     \node[vertex] (w3) at (0, -2) {};
    
    \node[above] at (v1) {$v_{1,1}$};
    \node[above] at (v2) {$v_{1,2}$};
    \node[above] at (v3) {$v_{1,3}$};
    \node[below] at (v4) {$v_{1,4}$};
    \node[below] at (v5) {$v_{1,5}$};
    \node[below] at (v6) {$v_{1,6}$};
    
    \node[below] at (u6) {$v_{2,1}$};
    \node[below] at (u5) {$v_{2,2}$};
    \node[below] at (u4) {$v_{2,3}$};
    \node[above] at (u3) {$v_{2,4}$};
    \node[above] at (u2) {$v_{2,5}$};
    \node[above] at (u1) {$v_{2,6}$};
    
    \draw[thick] (0,4.5) ellipse (3cm and 1.5cm);
    \draw[thick] (0,0.5) ellipse (3cm and 1.5cm);
    
    \draw[thick] (w1)--(w2);
    
    \draw[line width=4pt]
    (0,6)--(w2)
    (0,-1)--(w3)
    (0, 3) -- (0,2);
    
    \draw[thick, red] (u1)--(u2)--(u3)--(u4)--(u5)--(u6)--(u1)--(u4)
    (u3)--(u6)
    (u2)--(u5);
    \draw[thick, green] (v1)--(v2)--(v6)--(v3)--(v2)
    (v6)--(v5)--(v1)--(v4)--(v5)
    (v3)--(v4);
    \end{tikzpicture} \\
        (a) & (b) & (c)
    \end{tabular}
    
    \caption{Let the thick edges represent all possible edges between a vertex and a cluster of vertices. (a) A graph $G$ with $V_1=\{v_{1,1}, v_{1,2}, v_{1,3}, v_{1,4}, v_{1,5}, v_{1,6}\}$ and $V_2=\{v_{2,1}, v_{2,2}, v_{2,3}, v_{2,4}, v_{2,5}, v_{1,6}\}$ which are (co-degree) cousins. (b) A graph $G_1$ constructed from $G$ such that $G_1[V_1 \cup V_2]$ is a regular graph. Therefore, it is has the same generalized characteristic polynomial as $G_2$ shown in (c) by Corollary \ref{cor:genCharPoly}. Since $G_1[V_1 \cup V_2]$ is a regular graph with $diam=2$, it is transmission regular. Therefore this pair is cospectral for the distance matrix by Theorem \ref{cousinConst}.  }
    \label{fig:transmission regular}
\end{figure}
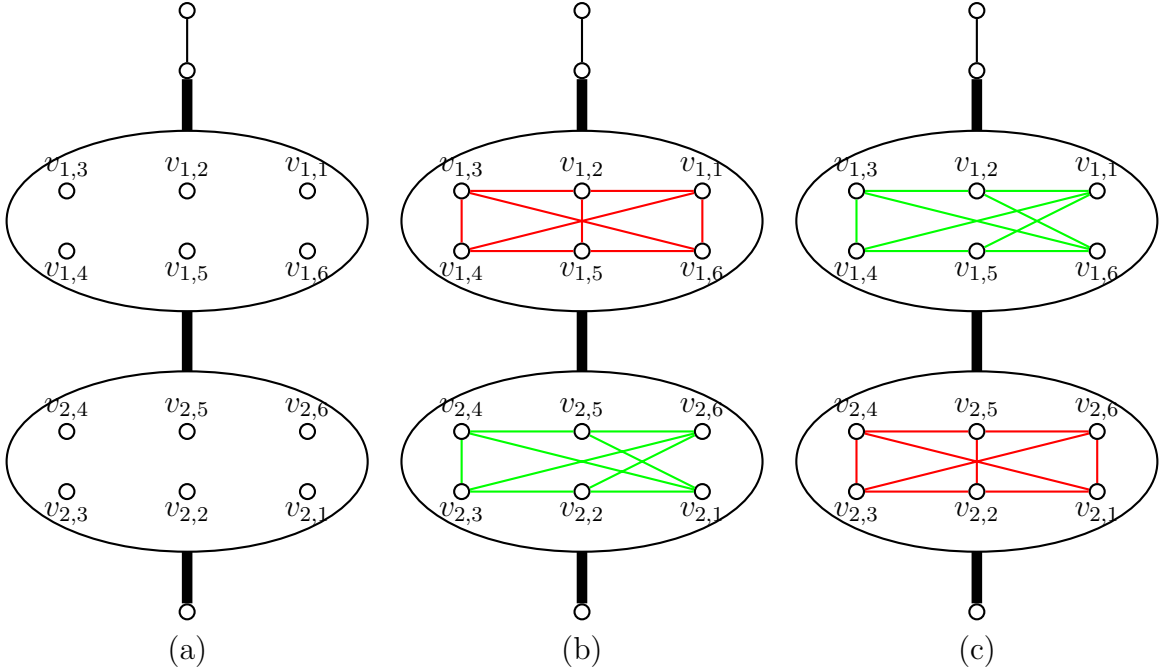

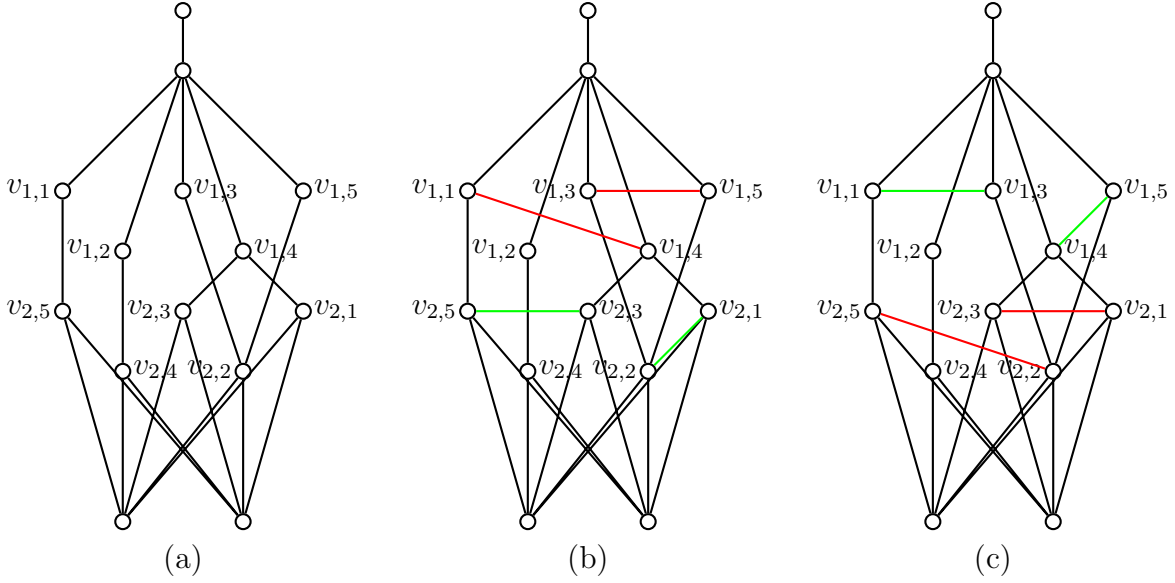
\begin{figure}[h]
    \centering
    \begin{tabular}{ccc}
         \begin{tikzpicture}[scale=.8]
    \node[vertex] (w1) at (0,8) {};
    \node[vertex] (w2) at (0,7) {};
    
    \node[vertex] (v4) at (1,4) {};
    \node[vertex] (v2) at (-1, 4) {};
    \node[vertex] (v5) at (2, 5) {};
    \node[vertex] (v3) at (0, 5) {};
    \node[vertex] (v1) at (-2, 5) {};
    
    \node[vertex] (u4) at (1, 2) {};
    \node[vertex] (u2) at (-1, 2) {};
    \node[vertex] (u5) at (2, 3) {};
    \node[vertex] (u3) at (0, 3) {};
    \node[vertex] (u1) at (-2, 3) {};
    
     \node[vertex] (w3) at (-1, -0.5) {};
    \node[vertex] (w4) at (1, -0.5) {};
    
    \node[left] at (v1) {$v_{1,1}$};
    \node[left] at (v2) {$v_{1,2}$};
    \node[right] at (v3) {$v_{1,3}$};
    \node[right] at (v4) {$v_{1,4}$};
    \node[right] at (v5) {$v_{1,5}$};
    
    \node[right] at (u5) {$v_{2,1}$};
    \node[left] at (u4) {$v_{2,2}$};
    \node[left] at (u3) {$v_{2,3}$};
    \node[right] at (u2) {$v_{2,4}$};
    \node[left] at (u1) {$v_{2,5}$};
    
    \draw[thick] (w1)--(w2)--(v1)
    (v3)--(w2)--(v2)
    (v5)--(w2)--(v4)
    (w3)--(u1)--(w4)
    (w3)--(u2)--(w4)
    (w3)--(u3)--(w4)
    (w3)--(u4)--(w4)
    (w3)--(u5)--(w4);
    
    \draw[thick] (v5)--(u4)--(v3)
    (u5)--(v4)--(u3)
    (v1)--(u1)
    (v2)--(u2);
    
    \end{tikzpicture} &  \begin{tikzpicture}[scale=.8]
    \node[vertex] (w1) at (0,8) {};
    \node[vertex] (w2) at (0,7) {};
    
    \node[vertex] (v4) at (1,4) {};
    \node[vertex] (v2) at (-1, 4) {};
    \node[vertex] (v5) at (2, 5) {};
    \node[vertex] (v3) at (0, 5) {};
    \node[vertex] (v1) at (-2, 5) {};
    
    \node[vertex] (u4) at (1, 2) {};
    \node[vertex] (u2) at (-1, 2) {};
    \node[vertex] (u5) at (2, 3) {};
    \node[vertex] (u3) at (0, 3) {};
    \node[vertex] (u1) at (-2, 3) {};
    
     \node[vertex] (w3) at (-1, -0.5) {};
    \node[vertex] (w4) at (1, -0.5) {};
    
    \node[left] at (v1) {$v_{1,1}$};
    \node[left] at (v2) {$v_{1,2}$};
    \node[left] at (v3) {$v_{1,3}$};
    \node[right] at (v4) {$v_{1,4}$};
    \node[right] at (v5) {$v_{1,5}$};
    
    \node[right] at (u5) {$v_{2,1}$};
    \node[left] at (u4) {$v_{2,2}$};
    \node[right] at (u3) {$v_{2,3}$};
    \node[right] at (u2) {$v_{2,4}$};
    \node[left] at (u1) {$v_{2,5}$};
    
    \draw[thick] (w1)--(w2)--(v1)
    (v3)--(w2)--(v2)
    (v5)--(w2)--(v4)
    (w3)--(u1)--(w4)
    (w3)--(u2)--(w4)
    (w3)--(u3)--(w4)
    (w3)--(u4)--(w4)
    (w3)--(u5)--(w4);
    
    \draw[thick] (v5)--(u4)--(v3)
    (u5)--(v4)--(u3)
    (v1)--(u1)
    (v2)--(u2);
    
    \draw[thick, red] (v1)--(v4) (v3)--(v5);
    \draw[thick, green] (u1)--(u3) (u4)--(u5);
    \end{tikzpicture} & \begin{tikzpicture}[scale=.8]
    \node[vertex] (w1) at (0,8) {};
    \node[vertex] (w2) at (0,7) {};
    
    \node[vertex] (v4) at (1,4) {};
    \node[vertex] (v2) at (-1, 4) {};
    \node[vertex] (v5) at (2, 5) {};
    \node[vertex] (v3) at (0, 5) {};
    \node[vertex] (v1) at (-2, 5) {};
    
    \node[vertex] (u4) at (1, 2) {};
    \node[vertex] (u2) at (-1, 2) {};
    \node[vertex] (u5) at (2, 3) {};
    \node[vertex] (u3) at (0, 3) {};
    \node[vertex] (u1) at (-2, 3) {};
    
     \node[vertex] (w3) at (-1, -0.5) {};
    \node[vertex] (w4) at (1, -0.5) {};
    
    \node[left] at (v1) {$v_{1,1}$};
    \node[left] at (v2) {$v_{1,2}$};
    \node[right] at (v3) {$v_{1,3}$};
    \node[right] at (v4) {$v_{1,4}$};
    \node[right] at (v5) {$v_{1,5}$};
    
    \node[right] at (u5) {$v_{2,1}$};
    \node[left] at (u4) {$v_{2,2}$};
    \node[left] at (u3) {$v_{2,3}$};
    \node[right] at (u2) {$v_{2,4}$};
    \node[left] at (u1) {$v_{2,5}$};
    
    \draw[thick] (w1)--(w2)--(v1)
    (v3)--(w2)--(v2)
    (v5)--(w2)--(v4)
    (w3)--(u1)--(w4)
    (w3)--(u2)--(w4)
    (w3)--(u3)--(w4)
    (w3)--(u4)--(w4)
    (w3)--(u5)--(w4);
    
    \draw[thick] (v5)--(u4)--(v3)
    (u5)--(v4)--(u3)
    (v1)--(u1)
    (v2)--(u2);
    
    \draw[thick, red] (u1)--(u4) (u3)--(u5);
    \draw[thick, green] (v1)--(v3) (v4)--(v5);
    \end{tikzpicture}\\
     (a)    & (b) & (c)
    \end{tabular}
    \caption{(a) A graph $G$ with $V_1=\{v_{1,1}, v_{1,2}, v_{1,3}, v_{1,4}, v_{1,5}\}$ a set of relaxed cousins to $V_2=\{v_{2,1}, v_{2,2}, v_{2,3}, v_{2,4}, v_{2,5}\}$. In this case, they are also co-transmission cousins. (b) A graph $G_1$ constructed from $G$ with $H_1=H_2=K_2 \cup K_2$ glued into $V_1$ and $V_2$. (c) A graph $G_2$ constructed from $G$ with $H_1=H_2=K_2 \cup K_2$ glued into $V_1$ and $V_2$. In (b) $G_1[V_1 \cup V_2]$ is a regular graph, therefore by Theorem \ref{cousinConst} (b) and (c) are cospectral for the adjacency matrix and distance Laplacian matrix. }
    \label{fig:CousWithRegularity}
\end{figure}

The adjacency matrix, combinatorial Laplacian, and signless Laplacian can be related using the \emph{generalized characteristic polynomial.} The generalized characteristic polynomial of a graph $G$, denoted $\phi_G(\lambda,r)$ is the determinant of the matrix \begin{align}
   N_G(\lambda,r) = \lambda I_n-A_G+rD_G.
\end{align}
It can be beneficial to use this matrix because we can write our adjacency, combinatorial Laplacian, signless Laplacian, and normalized Laplacian in terms of this matrix. For example if $p_M(\lambda)$ is the characteristic polynomial of $M$, then \begin{align*}
    p_A(\lambda)&=\phi_G(\lambda,0) \\
    p_L(\lambda)&=\phi_G(-\lambda, 1)\\
    p_{|L|}(\lambda)&=\phi_G(\lambda, -1)\\
    p_{\mathcal{L}}(\lambda)&=\frac{(-1)^{|V|}}{\det(D_G)} \phi_G(0,-\lambda+1).
\end{align*} 

Therefore if $\phi_G(\lambda,r)=\phi_H(\lambda,r)$, then graphs $G,H$ are cospectral for the adjacency, Laplacian, signless Laplacian, and normalized Laplacian. We have seen our construction classify pairs of graphs that are cospectral for three of these matrices and now we extend it to the generalized characteristic polynomial. 

\begin{cor} \label{cor:genCharPoly}
Let $G$ be a graph containing two vertex sets $V_1, V_2$ each on $m$ vertices such that 

\begin{enumerate}
    \item $G[V_1],G[V_2]$ are empty subgraphs;
    \item there exists a graph automorphism $\pi$ for $G[V_1 \cup V_2]$ such that $\pi(V_1)=V_2$, $\pi(V_2)=V_1$, and $\pi^2=id$. 
\end{enumerate}
Let $H_1$ and $H_2$ be any two graphs on $m$ vertices and $\phi_i$ be a bijective mapping from $H_i$ to $V_i$ for $i \in \{1,2\}$. Let $G_1=G+ \phi_{1}(E(H_1)) + \phi_{2}(E(H_2)))$  and $G_2=G+ \pi(\phi_{1}(E(H_1))) + \pi(\phi_{2}(E(H_2))))$. 

If $V_1$ is a set of co-degree cousins to $V_2$ and $G_1[V_1 \cup V_2]$ is a regular graph, then $\phi_{G_1}(\lambda,r)=\phi_{G_2}(\lambda,r)$ where $\phi(\lambda, r)$ is the generalized characteristic polynomial. 
\end{cor}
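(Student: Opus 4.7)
The plan is to leverage Theorem~\ref{cousinConst} three times at once and then package the pieces into a single identity at the level of $N_G(\lambda,r)=\lambda I-A_G+rD_G$. The hypotheses here (co-degree cousins with $G_1[V_1\cup V_2]$ regular) imply in particular that $V_1,V_2$ are relaxed cousins, co-degree cousins, and co-degree cousins with $G_1[V_1\cup V_2]$ regular, so Theorem~\ref{cousinConst} applies simultaneously to the adjacency matrix $A$, the combinatorial Laplacian $L$, and the signless Laplacian $|L|$. Inspecting the proof of that theorem, the similarity used in all three cases is the \emph{same} matrix $\mathcal{S}=\diag(\tfrac{1}{m}J_{2m}-\widehat{I}_{2m},\,I_{n-2m})$ from Lemma~\ref{lem:similarity}, because the labeling of $V_1\cup V_2$ is dictated solely by the involution $\pi$ on $G[V_1\cup V_2]$ and does not depend on the choice of matrix. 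Thus, with this common labeling,
\begin{align*}
\mathcal{S}\,A_{G_1}\,\mathcal{S}=A_{G_2},\qquad \mathcal{S}\,L_{G_1}\,\mathcal{S}=L_{G_2},\qquad \mathcal{S}\,|L|_{G_1}\,\mathcal{S}=|L|_{G_2}.
\end{align*}

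Next I would transport the diagonal degree matrix across $\mathcal{S}$. Since $D_{G_i}=L_{G_i}+A_{G_i}$, linearity of conjugation gives immediately
\begin{align*}
\mathcal{S}\,D_{G_1}\,\mathcal{S}=\mathcal{S}\,L_{G_1}\,\mathcal{S}+\mathcal{S}\,A_{G_1}\,\mathcal{S}=L_{G_2}+A_{G_2}=D_{G_2}.
\end{align*}
(One could equally use $2D_{G_i}=|L|_{G_i}+L_{G_i}$.) Combining with the adjacency identity, for every pair of indeterminates $\lambda,r$,
\begin{align*}
\mathcal{S}\,N_{G_1}(\lambda,r)\,\mathcal{S}=\lambda I-\mathcal{S}\,A_{G_1}\,\mathcal{S}+r\,\mathcal{S}\,D_{G_1}\,\mathcal{S}=\lambda I-A_{G_2}+rD_{G_2}=N_{G_2}(\lambda,r).
\end{align*}

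The last step is to take determinants. Because $\mathcal{S}^2=I$, we have $\det(\mathcal{S})=\pm 1$, so
\begin{align*}
\phi_{G_1}(\lambda,r)=\det\bigl(N_{G_1}(\lambda,r)\bigr)=\det\bigl(\mathcal{S}\,N_{G_1}(\lambda,r)\,\mathcal{S}\bigr)=\det\bigl(N_{G_2}(\lambda,r)\bigr)=\phi_{G_2}(\lambda,r),
\end{align*}
which is the desired equality of generalized characteristic polynomials. The only genuinely delicate point — and it is really bookkeeping rather than a serious obstacle — is verifying that the similarity $\mathcal{S}$ produced by Theorem~\ref{cousinConst} for the adjacency matrix is literally the same matrix, acting on the same vertex ordering, as the similarity produced for $L$; this is clear because $\mathcal{S}$ depends only on the involution $\pi$ and the sizes of $V_1,V_2$, not on which graph matrix is being considered. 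With that observation in hand, the whole corollary is a one-line consequence of the fact that a single intertwiner for both $A$ and $L$ is automatically an intertwiner for every $\mathbb{R}$-linear combination $\alpha A+\beta I+\gamma D$.
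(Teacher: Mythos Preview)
Your argument is correct. The key observation --- that the similarity matrix $\mathcal{S}$ produced in the proof of Theorem~\ref{cousinConst} depends only on the labeling induced by $\pi$ and not on which graph matrix is under consideration --- is sound, and once you have $\mathcal{S}A_{G_1}\mathcal{S}=A_{G_2}$ and $\mathcal{S}L_{G_1}\mathcal{S}=L_{G_2}$ with the \emph{same} $\mathcal{S}$, the rest is linear algebra.

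Your route differs from the paper's in packaging. The paper reruns the block-matrix verification of Lemma~\ref{lem:similarity} directly for $N_G(\lambda,r)=\lambda I-A+rD$: it checks that the $2m\times 2m$ principal block has constant row sums (using regularity of $G_1[V_1\cup V_2]$ for the off-diagonal part and the co-degree condition for the diagonal part) and that $Q$ has columns of the required $[p,\dots,p,r,\dots,r]^T$ form. You instead invoke Theorem~\ref{cousinConst} twice as a black box for $A$ and $L$, extract from its proof that the intertwiner is the same $\mathcal{S}$ in both cases, and then use $D=L+A$ to transport $D$ and hence any $\lambda I-A+rD$. The paper's argument is self-contained at the level of $N_G(\lambda,r)$ and makes explicit which structural hypotheses feed into which block condition; your argument is shorter and makes transparent the general principle that a common intertwiner for $A$ and $L$ automatically handles every matrix of the form $\alpha A+\beta I+\gamma D$. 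Both arrive at $\mathcal{S}N_{G_1}(\lambda,r)\mathcal{S}=N_{G_2}(\lambda,r)$ and then take determinants.
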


\begin{proof}
Since our off-diagonal entries of $N_{G}(\lambda,r)$ are the off-diagonal entries of $A_G$, it follows that no adjacency changes between a pair of vertices where at least one is not in $V_1$ nor $V_2$ when we add edges with both of its endpoints in $V_1$ or $V_2$.  
Therefore we can partition the respective matrix $N_{G_1}(\lambda,r),N_{G_2}(\lambda,r)$ of the two graphs into \begin{align*} \begin{bmatrix}
M_1 & Q \\
Q^T & B
\end{bmatrix} and \begin{bmatrix}
M_2 & Q \\
Q^T & B
\end{bmatrix}\end{align*} 
where $M_1,M_2$ are $2m \times 2m$ submatrices that are indexed by the vertices in $V_1$ followed by the vertices in $V_2$. 

In an analogous argument in the proof of Theorem \ref{cousinConst}, we know that that we can always label the graphs such that $\T{M_1}=M_2$. 

We claim that $M_1$ (and $M_2$) is a symmetric $2m \times 2m$ matrix with constant row and column sums and the columns of $Q$ all have the form $[p, \dots, p, r, \dots, r]^T$ where $p, r$ appear $m$-times. 

We know that $M_1$ is $\lambda I_n- A+rD$ restricted to the vertices $V_1,V_2$ and $Q$ is similarly defined. Since $I_n, D$ are both diagonal matrices, the entries of $Q$ are only from the adjacency matrix of $G_1$. 

We know $V_1, V_2$ are co-degree cousins which means that $Q$ has constant row sums and has columns of the form $[p, \dots, p, r, \dots, r]^T$ where $p, r\in \{0,-1\}$ appear $m$-times. 

Since $G_1[V_1 \cup V_2]$ is a regular graph, the sum of the rows using only the non-diagonal entries of $M_1$ is constant since these entries are only from the adjacency matrix. We know that the diagonal entries of $M_1$ are $\lambda$ plus $r$ times the sums of the non-diagonal entries of $[M_1, Q]$. This is a constant because the sum non-diagonal entries of $M_1$ is constant and $Q$ has constant row sums. Therefore $M_1$ has constant diagonal entries and moreover constant row sums.

Thus by Lemma \ref{lem:similarity}, $\mathcal{S}$ is a similarity matrix for $N(\lambda,r)$ of $G_1,G_2$. Therefore $\phi_{G_1}(\lambda,r)=\phi_{G_2}(\lambda,r)$. 
\end{proof}

This allows us to state our construction for the normalized Laplacian matrix since the characteristic polynomial of the normalized Laplacian can be written in terms of the generalized characteristic polynomial. 

\begin{cor} \label{cor:normLapla}
Let $G$ be a graph containing two vertex sets $V_1, V_2$ each on $m$ vertices such that 

\begin{enumerate}
    \item $G[V_1],G[V_2]$ are empty subgraphs;
    \item there exists a graph automorphism $\pi$ for $G[V_1 \cup V_2]$ such that $\pi(V_1)=V_2$, $\pi(V_2)=V_1$, and $\pi^2=id$. 
\end{enumerate}
Let $H_1$ and $H_2$ be any two graphs on $m$ vertices and $\phi_i$ be a bijective mapping from $H_i$ to $V_i$ for $i \in \{1,2\}$. Let $G_1=G+ \phi_{1}(E(H_1)) + \phi_{2}(E(H_2)))$  and $G_2=G+ \pi(\phi_{1}(E(H_1))) + \pi(\phi_{2}(E(H_2))))$. 

If $V_1$ is a set of co-degree cousins to $V_2$ and $G_1[V_1 \cup V_2]$ is a regular graph, then $G_1$ and $G_2$ are cospectral for the Normalized Laplacian.
\end{cor}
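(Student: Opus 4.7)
The plan is to reduce directly to Corollary~\ref{cor:genCharPoly} via the identity
\[
p_{\mathcal{L}(G)}(\mu) \;=\; \frac{(-1)^{|V|}}{\det(D_G)}\,\phi_G(0,\,-\mu+1)
\]
recorded just before that corollary. The hypotheses of Corollary~\ref{cor:normLapla} match those of Corollary~\ref{cor:genCharPoly} exactly, so the latter already gives $\phi_{G_1}(\lambda,r)=\phi_{G_2}(\lambda,r)$ as polynomials in two variables. Specializing at $(\lambda,r)=(0,-\mu+1)$, the numerators in the displayed identity agree for $G_1$ and $G_2$. To conclude that the normalized Laplacian characteristic polynomials agree, I only need to verify that the denominators are also equal, i.e.\ that $\det(D_{G_1})=\det(D_{G_2})$.

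Since $D_{G_1},D_{G_2}$ are diagonal, this reduces to comparing the multisets of vertex degrees. For any $v\in V(G)\setminus(V_1\cup V_2)$, the edge sets of $G_1$ and $G_2$ differ only by edges whose endpoints both lie in $V_1$ or both lie in $V_2$, so $v$ has the same neighborhood in both graphs and $\deg_{G_1}(v)=\deg_{G_2}(v)$. It remains to handle vertices of $V_1\cup V_2$.

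For $v\in V_1\cup V_2$, write $\deg_{G_i}(v) = \deg_{G_i[V_1\cup V_2]}(v) + |N_G(v)\setminus(V_1\cup V_2)|$, using that no edges within $V_1\cup V_2$ or from $V_1\cup V_2$ to the rest change when $H_1,H_2$ are glued in outside this induced subgraph. The co-degree cousin condition forces the second summand to be the same constant $d$ for every $v\in V_1\cup V_2$ (recall $G[V_1]$ and $G[V_2]$ are empty, so $N_G(v)\setminus(V_1\cup V_2) = N_G(v)\setminus V_{3-j}$ for $v\in V_j$). For the first summand, the hypothesis says $G_1[V_1\cup V_2]$ is $k$-regular for some $k$, and by Lemma~\ref{lem:isomorphism}, $G_2[V_1\cup V_2]\cong G_1[V_1\cup V_2]$ (via $\pi$), so $G_2[V_1\cup V_2]$ is also $k$-regular. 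Hence every vertex of $V_1\cup V_2$ has degree $k+d$ in each of $G_1,G_2$. Thus $D_{G_1}$ and $D_{G_2}$ have the same multiset of diagonal entries, giving $\det(D_{G_1})=\det(D_{G_2})$, and the identity then forces $p_{\mathcal{L}(G_1)}(\mu)=p_{\mathcal{L}(G_2)}(\mu)$. There is no real obstacle here: the only step requiring attention is the degree count, which relies on combining the co-degree cousin hypothesis with the regularity of $G_1[V_1\cup V_2]$ transported through the automorphism $\pi$ of Lemma~\ref{lem:isomorphism}.
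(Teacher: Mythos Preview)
Your proof is correct and follows the same reduction to Corollary~\ref{cor:genCharPoly} that the paper uses; the paper's own proof is a one-liner invoking that corollary together with the previously stated implication that $\phi_{G_1}=\phi_{G_2}$ forces normalized-Laplacian cospectrality. Your explicit verification that $\det(D_{G_1})=\det(D_{G_2})$ is fine but unnecessary: since $\phi_G(0,r)=\det(rD_G-A_G)$ has leading $r$-coefficient $\det(D_G)$, equality of the generalized characteristic polynomials already yields it.
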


\begin{proof}
This follows immediately from Corollary \ref{cor:genCharPoly}. 
\end{proof}

Figure \ref{fig:transmission regular} gives an example of a graph $G$ and graphs to glue in that meet the hypothesis of Corollary \ref{cor:genCharPoly}, therefore the graphs given in Figure \ref{fig:transmission regular} (b) and (c) have the same generalized characteristic polynomial and are cospectral for the normalized Laplacian. 

\section{Concluding Remarks}

We have presented an extension of a construction method and applied it to many matrices. 
A natural question about this construction method is what fraction of cospectral graphs does this explain for each matrix? In addition, there are smaller examples then those shown in this paper for some matrices, but it is unknown if there is a smaller example for the distance matrix. Is there an example of a pair of graphs that are cospectral for all six matrices discussed here? 

There is also some evidence that the graph automorphism $\pi$ as described in Theorem \ref{cousinConst} does not need $\pi(V_1)=V_2$ and $\pi(V_2)=V_1$ for a similar construction shown in \cite{grwc}. Finding other conditions or cases when $\pi$ is some other graph automorphism where the spirit of Theorem \ref{cousinConst} holds is an interesting open problem. 

Cospectral constructions have now been shown for adjacency matrices using $\diag(\frac{1}{k}J -I, I)$ (\cite{GM,HS04}) and $\diag(\frac{1}{k}J - \hat{I},I)$ (Theorem \ref{cousinConst}) as similarity matrices. Since both $I$ and $\hat{I}$ are symmetric permutation matrices, is there a cospectral construction for every symmetric permutation matrix $P$ where $\diag(\frac{1}{k}J-P,I)$ is the similarity matrix? This problem has been studied for orthogonal matrices in \cite{AH12, WQH19} where the construction method is switching.

\end{document}